\theoremstyle{plain}
\newtheorem{thm}{Theorem}[section]
\newtheorem{prop}[thm]{Proposition}
\newtheorem{lem}[thm]{Lemma}
\theoremstyle{definition}
\newtheorem{defn}[thm]{Definition}
\newtheorem{exmp}[thm]{Example}
\theoremstyle{remark}
\newtheorem{rem}[thm]{Remark}
\theoremstyle{plain}
\newcommand{\R}{\mathbb{R}}
\newcommand{\C}{\mathbb{C}}
\newcommand{\N}{\mathbb{N}}
\newcommand{\CP}{\mathbb{C}P}
\newcommand{\dimn}{\mathrm{dim}}
\newcommand{\identity}{\mathrm{id}}
\newcommand{\scal}{\mathrm{scal}}
\newcommand{\ric}{\mathrm{Ric}}
\newcommand{\pr}{\mathrm{pr}}
\newcommand{\trace}{\mathrm{tr}}
\newcommand{\kernel}{\mathrm{ker}}
\newcommand{\dv}{\text{ }dV}
\newcommand{\edv}{\text{ }e^{-f}dV}
\newcommand{\efgdv}{\text{ }e^{-f_g}dV}
\newcommand{\Diff}{\mathrm{Diff}}
\newcommand{\spectrum}{\mathrm{spec}}
\newcommand{\gradient}{\mathrm{grad}}
\newcommand*{\suchthat}[1]{\left|\vphantom{#1}\right.}
\newcommand{\Sol}{\mathrm{Sol}}
\newcommand{\Iso}{\mathrm{Iso}}
\renewcommand{\title}[1]{{\bfseries #1}\par}
\renewcommand{\author}[1]{\medskip{#1}\par\smallskip}
\newcommand{\affiliation}[1]{{\itshape #1}\par}
\newcommand{\email}[1]{E-mail:~\texttt{#1}\par}
\numberwithin{equation}{section}
\begin{document}
\begin{center}
\title{\LARGE Rigidity and infinitesimal deformability of Ricci solitons}
\vspace{3mm}
\author{\Large Klaus Kröncke}
\vspace{3mm}
\affiliation{Universität Regensburg, Fakultät für Mathematik\\Universitätsstraße 31\\93053 Regensburg, Germany}
\vspace{3mm} 
\affiliation{Universität Potsdam, Institut für Mathematik\\Am Neuen Palais 10\\14469 Potsdam, Germany} 
\email{klaus.kroencke@mathematik.uni-regensburg.de} 
\end{center}
\vspace{2mm}
\begin{abstract}In this paper, an obstruction against the integrability of certain infinitesimal solitonic deformations is given. Using this obstruction, we show that the complex projective spaces of even complex dimension are rigid as Ricci solitons although they have infinitesimal solitonic
deformations.
\end{abstract}
\section{Introduction}
 A Riemannian manifold $(M,g)$ is called a Ricci soliton if there exist a vector field $X\in\mathfrak{X}(M)$ and a constant $c\in\R$ such that the Ricci tensor satisfies the equation
\begin{align}\label{solitonequation}
 \ric_g+\frac{1}{2}L_Xg=c\cdot g.
\end{align}
Ricci solitons were first introduced by Hamilton in the eighties \cite{Ham88}. They appear as self-similar solutions of the Ricci flow: Ricci solitons evolve particularly simple under the Ricci flow, namely by diffeomorphisms and rescalings.

A soliton is called gradient, if $X=\gradient f$ for some $f\in C^{\infty}(M)$. We call $(M,g)$ expanding, if $c<0$, steady, if $c=0$ and shrinking, if $c>0$.
 If $X=0$, we recover the definition of an Einstein metric with Einstein constant $c$. If $X\neq0$, we call the soliton nontrivial.
In the compact case, any soliton is gradient and any expanding or steady soliton is Einstein. 
For a detailed expository to the theory of Ricci solitons, see e.g.\ \cite{Cao10,CC07}.

In this paper, we study the moduli space of Ricci solitons (the set of Ricci solitons modulo diffeomorphism and rescaling) on compact manifolds.
A local description of this moduli space can be given using a modified version of Ebin's slice theorem provided by Podest{\`a} and Spiro \cite{PS13}.
They construct a slice to the natural action of the diffeomorphism group on the space of metrics with tangent space $\kernel(\delta_f)$. Here, $\delta_f$ is the divergence
weighted with the smooth function $f$.

Let $(M,g)$ be a Ricci soliton. A tensor $h\in C^{\infty}(S^2M)$ is called infinitesimal solitonic deformation, if $\delta_f(h)=0$ (where $f$ is the soliton potential), $\int_M \trace h\edv=0$
and $h$ lies in the kernel of the linearization of \eqref{solitonequation}.
If $g$ is not an isolated point in the moduli space, it admits infinitesimal solitonic deformations.
 Conversely, given an infinitesimal solitonic deformation $h$, it is not clear whether it is integrable, i.e.\ if there exists a curve of Ricci solitons tangent to $h$.
 In this paper, we show that for certain deformations, this is not the case. Moreover, we prove that $\CP^{2n}$ with the Fubini-Study metric is an isolated point in the moduli space of Ricci solitons although it has solitonic deformations.

Some obstructions against the existence of infinitesimal solitonic deformations are given in \cite{PS13}.
Analoguous questions have been studied in the Einstein case before \cite{Koi78,Koi80,Koi82,Koi83}, see also \cite{Bes08} and the methods used here are quite similar. 

This paper is organized as follows: In Section \ref{shrinkerentropy}, we introduce Perelman's shrinker entropy, which provides a variational characterization of shrinking Ricci solitons as its critical points.
In Section \ref{modulispace}, we introduce various notions of rigidity. We mention a theorem of \cite{PS13} describing the structure of the moduli space of Ricci solitons and generalizing
previous results obtained in the Einstein case \cite{Koi83}.
We prove a criterion for weak solitonic rigidity of Einstein manifolds:
\begin{prop}\label{weakrigidity}
 Let $(M,g)$ be an Einstein manifold with Einstein constant $\mu>0$. If $2\mu\notin\spectrum(\Delta)$, any Ricci soliton $H^s$-close (with $s\geq[\frac{n}{2}]+3$) to $g$ is Einstein.
\end{prop}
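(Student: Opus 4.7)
The plan is to derive a nonlinear scalar equation that any Ricci soliton potential must satisfy and to apply the implicit function theorem, with $2\mu \notin \spectrum(\Delta)$ providing the invertibility of the linearization at the Einstein background.

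Suppose $\bar g$ is a Ricci soliton with potential $\tilde f$ and constant $\tilde c$, so $\ric_{\bar g} + \nabla^2_{\bar g}\tilde f = \tilde c\,\bar g$. Tracing gives $R_{\bar g} - \Delta_{\bar g}\tilde f = n\tilde c$, while taking the divergence of the soliton equation and invoking the contracted Bianchi identity yields the familiar identity $R_{\bar g} + |\nabla \tilde f|^2_{\bar g} - 2\tilde c\,\tilde f = K$ for some constant $K \in \R$. Subtracting these two relations, any Ricci soliton satisfies
\[
\Delta_{\bar g}\tilde f + |\nabla \tilde f|^2_{\bar g} - 2\tilde c\,\tilde f = K - n\tilde c,
\]
and passing to equivalence classes modulo constants turns this into the condition
\[
F(\bar g, \tilde c, [\tilde f]) \;:=\; [\Delta_{\bar g}\tilde f + |\nabla \tilde f|^2_{\bar g} - 2\tilde c\,\tilde f] \;=\; 0 \quad \text{in } H^s(M)/\R,
\]
which is well defined on $[\tilde f] \in H^{s+2}(M)/\R$ since the left-hand side is invariant under $\tilde f \mapsto \tilde f + \mathrm{const}$ once we quotient.

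At $(g, \mu, [0])$ we have $F = 0$. The quadratic term $|\nabla \tilde f|^2$ contributes nothing to the linearization in $[\tilde f]$, so that linearization is
\[
[\phi] \;\mapsto\; [\Delta_g\phi - 2\mu\,\phi]\;:\; H^{s+2}(M)/\R \;\longrightarrow\; H^s(M)/\R,
\]
an elliptic Fredholm operator of index zero. Its kernel, the space of $[\phi]$ with $\Delta_g\phi = 2\mu\phi$, is trivial precisely when $2\mu \notin \spectrum(\Delta)$---this is the only place the hypothesis is used---so it is an isomorphism. The implicit function theorem then furnishes neighborhoods $U$ of $(g, \mu)$ and $W$ of $[0]$ such that, for every $(\bar g, \tilde c) \in U$, the equation $F(\bar g, \tilde c, [\tilde f]) = 0$ has a unique solution $[\tilde f] \in W$. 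Because $[\tilde f] = [0]$ trivially solves $F = 0$ for every $(\bar g, \tilde c)$, this unique solution must be $[\tilde f] = [0]$.

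It remains to check that the soliton data of any $\bar g$ sufficiently $H^s$-close to $g$ lie in $U \times W$. The soliton constant is determined by $\tilde c = (n\volume(M, \bar g))^{-1}\int_M R_{\bar g}\,dV_{\bar g}$, which tends to $\mu$ as $\bar g \to g$ in $H^s$. The trace equation $\Delta_{\bar g}\tilde f = R_{\bar g} - n\tilde c$ together with elliptic regularity makes the mean-zero representative of $[\tilde f]$ small in $H^{s+2}$, so $[\tilde f]$ lies in $W$. Applying the implicit function theorem then forces $[\tilde f] = [0]$; hence $\tilde f$ is constant, $\nabla^2_{\bar g}\tilde f = 0$, and the soliton equation collapses to $\ric_{\bar g} = \tilde c\,\bar g$, so $\bar g$ is Einstein. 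The main obstacle is this final quantitative continuous-dependence step; the rest is a clean implicit function theorem argument.
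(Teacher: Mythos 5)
Your argument is essentially correct in spirit, but it takes a genuinely different route from the paper. The paper argues by contradiction via spectral continuity: a nontrivial compact shrinking soliton has $\frac{1}{\tau}=2\tilde c\in\spectrum(\Delta_{f})$ (with eigenfunction essentially $f$ itself, from the Euler--Lagrange equation for the minimizer), and the minimax characterization shows that the spectrum of the weighted Laplacian depends continuously on the metric in $C^{2,\alpha}$, hence in $H^s$ by Sobolev embedding; a sequence of nontrivial solitons converging to $g$ would then force $2\mu\in\spectrum(\Delta)$. You use the same underlying identity --- your scalar equation $\Delta\tilde f+|\nabla\tilde f|^2-2\tilde c\tilde f=\mathrm{const}$ is precisely the statement that $\tilde f$ is, up to a constant, a $2\tilde c$-eigenfunction of the drift Laplacian --- but instead of spectral continuity you quotient out constants and run an implicit-function-theorem uniqueness argument, with $2\mu\notin\spectrum(\Delta)$ giving invertibility of $\Delta-2\mu$ and the trivial solution $[\tilde f]=[0]$ absorbing all nearby solutions. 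This is a legitimate, more quantitative alternative, and it avoids discussing eigenvalue continuity for the weighted operator. (You do use implicitly that the nearby soliton is gradient and may be assumed shrinking; this is fine, since compact solitons are gradient and compact steady or expanding solitons are Einstein, but it should be said.)

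The one step that fails as written is the Sobolev bookkeeping. With $\bar g$ only $H^s$-close to $g$, the map $(\bar g,[\tilde f])\mapsto[\Delta_{\bar g}\tilde f+|\nabla\tilde f|^2_{\bar g}-2\tilde c\,\tilde f]$ does not land in $H^s/\R$ for $[\tilde f]\in H^{s+2}/\R$ (the Christoffel symbols of $\bar g$ are only in $H^{s-1}$, so the output is in general only $H^{s-1}$), and, more seriously, your verification that the soliton data lie in the IFT neighbourhood breaks down at that regularity: from $\Delta_{\bar g}\tilde f=R_{\bar g}-n\tilde c$ with right-hand side small only in $H^{s-2}$ (this is all that $\|\bar g-g\|_{H^s}$ small gives), elliptic estimates control the mean-zero potential in $H^{s}$, not in $H^{s+2}$ as you claim; since an $H^s$-close smooth metric need not be close in any stronger norm, this cannot be repaired by smoothness alone. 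The fix is to re-index: set up $F$ on (metrics $H^s$-close to $g$)$\times\R\times H^{s}/\R\to H^{s-2}/\R$. Since $s\geq[\frac{n}{2}]+3$, $H^{s-2}$ is an algebra, so $F$ is smooth; the linearization $[\phi]\mapsto[\Delta\phi-2\mu\phi]\colon H^{s}/\R\to H^{s-2}/\R$ is an isomorphism exactly when $2\mu\notin\spectrum(\Delta)$ (note the kernel consists of $[\phi]$ with $\Delta\phi-2\mu\phi$ constant, which reduces to the eigenvalue equation for the mean-zero part); and the elliptic estimate, with constants uniform over metrics $H^s$-close to $g$, gives precisely the $H^{s}$-smallness of the mean-zero potential that you need before invoking uniqueness. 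With these adjustments your proof goes through.
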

In Section \ref{nonintegrability}, we discuss integrability of infinitesimal solitonic deformations. 
If $(M,g)$ is Einstein with constant $\mu$ and $2\mu$ is a positive eigenvalue of the Laplacian, we have infinitesimal solitonic deformations which can be formed of the corresponding eigenfunctions.
For these deformations, we prove an obstruction against integrability:
\begin{thm}\label{rigiditytheorem}
 Let $(M,g)$ be an Einstein manifold with Einstein constant $\mu>0$. Let $v\in C^{\infty}(M)$ be such that $\Delta v=2\mu v$. Then the infinitesimal solitonic deformation $\mu v\cdot g+\nabla^2 v$ is not integrable if there exists another function
 $w\in C^{\infty}(M)$ with $\Delta w=2\mu w$ such that
\begin{align*}
 \int_M v^2 w \dv\neq 0.
\end{align*}
\end{thm}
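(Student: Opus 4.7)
The plan is to argue by contradiction, expanding the soliton equation to second order in $t$ and pairing the resulting obstruction with a second infinitesimal solitonic deformation. Suppose $h = \mu v\cdot g + \nabla^2 v$ is integrable, so that there exists a smooth curve $(g(t), f(t), c(t))$ of Ricci solitons with $g(0) = g$, $f(0)$ constant, $c(0) = \mu$, and $g'(0) = h$. Write
\[
 g(t) = g + th + \tfrac{t^2}{2}h_2 + O(t^3),\ \ f(t) = tf_1 + \tfrac{t^2}{2}f_2 + O(t^3),\ \ c(t) = \mu + tc_1 + \tfrac{t^2}{2}c_2 + O(t^3),
\]
and substitute into the gradient soliton equation $\ric_{g(t)} + \nabla^{g(t),2}f(t) = c(t)\,g(t)$.

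At order $t^1$ the equation reduces to $\ric'[h] + \nabla^2 f_1 - c_1 g - \mu h = 0$. Using the conformal variation formula $\ric'[vg] = -\tfrac{n-2}{2}\nabla^2 v + \tfrac12(\Delta v)\,g$, the diffeomorphism identity $\ric'[\nabla^2 v] = \mu\nabla^2 v$ (which follows from $\nabla^2 v = \tfrac12 L_{\nabla v}g$), and $\Delta v = 2\mu v$, one verifies that the first-order equation collapses to $\nabla^2\bigl(f_1 - \tfrac{n-2}{2}\mu v\bigr) = c_1 g$. Since we may assume $(M,g)$ is not a round sphere (otherwise Einstein rigidity already applies), the only concircular functions on $(M,g)$ are constants, forcing $f_1 = \tfrac{n-2}{2}\mu v$ up to an irrelevant constant and $c_1 = 0$.

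At order $t^2$ the equation takes the schematic form $\Lambda(h_2, f_2, c_2) = -Q$, where $\Lambda$ is the linearization of the soliton operator on triples and $Q$ is the quadratic form collecting the second-order contributions coming from $\ric$, from the Hessian of $f(t)$ taken with respect to $g(t)$, and from the rescaling term $c(t)g(t)$, all evaluated on $(h, f_1, 0)$. In the $\delta_f$-divergence-free slice of Podest{\`a}--Spiro, $\Lambda$ is formally self-adjoint, so solvability for $(h_2, f_2, c_2)$ is equivalent to $Q$ being $L^2$-orthogonal to every infinitesimal solitonic deformation $k$. Testing against $k = \mu w\cdot g + \nabla^2 w$---which by the same first-order computation is itself an infinitesimal solitonic deformation whenever $\Delta w = 2\mu w$---the necessary condition becomes $\int_M \langle Q,\, k\rangle\dv = 0$.

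The decisive step is the explicit evaluation of this pairing. Expanding the second-order contributions and integrating by parts repeatedly, using $\Delta v = 2\mu v$, $\Delta w = 2\mu w$, the Einstein condition $\ric = \mu g$, and the Weitzenböck identity $\nabla^i\nabla_i\nabla_j v = \mu\nabla_j v - \nabla_j\Delta v = -\mu\nabla_j v$ (together with its analog for $w$), one systematically removes all derivatives of $v$ and $w$ from the integrand. Ancillary terms of the form $\int |\nabla v|^2 w\dv$ are reduced to multiples of $\int v^2 w\dv$ via the identity $\Delta(v^2 w)$. After substantial cancellation, the pairing simplifies to a nonzero numerical constant (depending only on $n$ and $\mu$) times $\int_M v^2 w \dv$. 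The hypothesis that $\int_M v^2 w\dv \neq 0$ for some eigenfunction $w$ then contradicts this necessary condition, and hence $h$ cannot be integrable. The main technical obstacle is the bookkeeping of the many quadratic contributions---from the second variations of $\ric$, the Hessian, and the volume element---and verifying that after all cancellations the surviving coefficient of $\int v^2 w\dv$ is genuinely nonzero.
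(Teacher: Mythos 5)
Your strategy (contradiction via the second-order expansion of the soliton equation, then pairing the quadratic obstruction against a second eigenfunction deformation) follows the same outline as the paper, but the two steps that carry all the weight are asserted rather than proved. First, you expand $\ric_{g(t)}+\nabla^{g(t),2}f(t)=c(t)g(t)$ treating $(g,f,c)$ as independent unknowns, so your linearization $\Lambda$ maps triples $(h_2,f_2,c_2)$ to symmetric $2$-tensors. Such an operator is not an endomorphism, so ``formally self-adjoint in the slice'' is not a valid justification; the solvability condition is orthogonality of $Q$ to $\kernel(\Lambda^*)$, which you never identify. This is not a cosmetic issue: for instance $w\cdot g$ alone is \emph{not} orthogonal to the image of $\Lambda$, since $\int_M\langle\nabla^2 f_2,w\,g\rangle\dv=-\int_M (\Delta w) f_2\dv=-2\mu\int_M wf_2\dv\neq 0$ in general, so one must verify by hand that your test tensor $\mu w\cdot g+\nabla^2 w$ annihilates all three blocks of $\Lambda$, including the $(D\ric)^*-\mu$ block, before the necessary condition $\int\langle Q,k\rangle\dv=0$ can be invoked. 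The paper sidesteps exactly this by eliminating $f$ and $c$: it works with $\Sol_g=\tau_g(\ric_g+\nabla^2 f_g)-\tfrac{g}{2}$, where $(f_g,\tau_g)$ are the Perelman minimizers depending analytically on $g$, so that $D\Sol=-\tau N$ is a genuine self-adjoint endomorphism (a second variation of $\nu$), its kernel decomposes as $ISD\oplus\R\cdot g\oplus\delta_f^*(\Omega^1(M))$, and $D\Sol(w\cdot g)=0$ makes $w\cdot g$ a legitimate test element.

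Second, and more seriously, the ``decisive step'' is missing: you do not compute the pairing, you only assert that after ``substantial cancellation'' it equals a nonzero constant times $\int_M v^2w\dv$. That computation \emph{is} the theorem. In the paper it requires the second variations of $\ric$, of the Hessian term (hence the second variation $u=f''$ of the Perelman potential, determined by $(\tfrac{1}{\mu}\Delta-1)u=\ldots$), and of $\tau_g$, and the outcome is $-(n-2)\int_M v^2w\dv$. Note that this coefficient vanishes when $n=2$, so the nonvanishing you take for granted is genuinely not automatic and cannot be certified without carrying out the bookkeeping you defer. A smaller point: integrability in the paper means only a $C^1$ curve of solitons exists; extracting second-order Taylor data and smooth dependence of $(f(t),c(t))$ on $t$ uses the analyticity of the soliton set inside the Podest{\`a}--Spiro slice and of the Perelman minimizers, which you assume implicitly. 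Your first-order analysis (identifying $f_1$ up to a constant and $c_1=0$) is fine, though the Obata-type caveat about the round sphere is unnecessary: tracing and integrating $\nabla^2\phi=c_1g$ already forces $c_1=0$ and $\phi$ constant on any compact Einstein manifold with $\mu>0$.
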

\noindent
Concrete examples are discussed in Section \ref{examples}. We use the above theorem to prove
\begin{thm}\label{CP2n}
 The $\CP^{2n}$ with the Fubini-Study metric is isolated in the moduli space of Ricci solitons although it has infinitesimal solitonic deformations. 
\end{thm}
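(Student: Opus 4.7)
The plan is to apply Theorem \ref{rigiditytheorem} to every nontrivial infinitesimal solitonic deformation of $(\CP^{2n},g_{FS})$, using an explicit representation-theoretic description of the first eigenspace of the Laplacian, and to exploit the oddness of $2n+1$ at the last step.

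First, I would set up the eigenspace picture. Normalizing the Fubini-Study metric so that $\ric_{g_{FS}} = (2n+1)g_{FS}$, the first nonzero eigenvalue of $\Delta$ equals $2\mu = 4n+2$, and the corresponding eigenspace admits the explicit description
\[ V_1 = \bigl\{ f_A([z]) = |z|^{-2}\langle Az,z\rangle : A \in \mathrm{Herm}_0(2n+1)\bigr\}, \]
where $\mathrm{Herm}_0(2n+1)$ denotes the traceless Hermitian $(2n+1)\times(2n+1)$ matrices; as an $SU(2n+1)$-module, $V_1$ is the complexified adjoint representation. Combining (a) the general splitting of infinitesimal solitonic deformations of an Einstein metric into infinitesimal Einstein deformations and deformations of the form $h_v = \mu v\, g + \nabla^2 v$ with $\Delta v = 2\mu v$, together with (b) Koiso's classification theorem that $\CP^n$ admits no infinitesimal Einstein deformations, every infinitesimal solitonic deformation of $(\CP^{2n}, g_{FS})$ has the form $h_{f_A}$ for some $A \in \mathrm{Herm}_0(2n+1)$.

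Next, in order to invoke Theorem \ref{rigiditytheorem} for each such $A \neq 0$, I must produce $B \in \mathrm{Herm}_0(2n+1)$ with $\int_{\CP^{2n}} f_A^2 f_B \, dV \neq 0$. The symmetric trilinear form $(A_1, A_2, A_3) \mapsto \int f_{A_1} f_{A_2} f_{A_3}\, dV$ is $SU(2n+1)$-invariant; since $2n+1\geq 3$, the space of such invariants on $\mathfrak{sl}(2n+1, \C)$ is one-dimensional, spanned by the polarization of the cubic Casimir. Consequently
\[ \int_{\CP^{2n}} f_A^2\, f_B \, dV \;=\; c\cdot \trace(A^2 B) \]
for a universal constant $c$, which I would verify is nonzero by computing $\int f_A^3\, dV$ on the diagonal test matrix $A = \mathrm{diag}(2n, -1, \ldots, -1)$ via the standard moment formula $\int_{S^{4n+1}} |z_0|^{2p}\, d\mu = (2n)!\,p!/(2n+p)!$.

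Finally comes the parity step: if $\trace(A^2 B) = 0$ for every traceless Hermitian $B$, then $A^2 = \alpha I$ for some scalar $\alpha$, so $A$ has eigenvalues $\pm\sqrt{\alpha}$ with equal multiplicities by tracelessness; the matrix size $2n+1$ being odd forces $A = 0$. Hence the hypothesis of Theorem \ref{rigiditytheorem} is verified for every nontrivial $h_{f_A}$, and combined with the Podestà-Spiro slice description of the Ricci-soliton moduli space, this forces $(\CP^{2n}, g_{FS})$ to be isolated. The main obstacles I anticipate are the appeal to Koiso's theorem in order to exclude the TT-Einstein piece and the (explicit but delicate) check that $c\neq 0$; the concluding parity step, which cleanly explains why the theorem singles out $\CP^{2n}$ rather than $\CP^{2n+1}$, is then immediate.
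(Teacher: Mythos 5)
Your proposal is correct and follows the same overall strategy as the paper: use the absence of infinitesimal Einstein deformations on $\CP^{m}$ to reduce $ISD$ to the deformations $\mu v\,g+\nabla^2 v$ with $\Delta v=2\mu v$, verify the hypothesis of Theorem \ref{rigiditytheorem} for every nonzero eigenfunction, and conclude isolation via the analyticity/slice machinery (this last step is exactly Lemma \ref{rigidity} in the paper, which is what your appeal to the Podest\`a--Spiro slice amounts to). Where you diverge is in how the nonvanishing $\int_M v^2 w\,dV\neq 0$ is established. The paper identifies $E(2\mu)$ with the harmonic polynomials $H_{1,1}$ on $\C^{2n+1}$, diagonalizes so that $v$ corresponds to $f=\sum_i\lambda_i|z_i|^2$ with $\sum_i\lambda_i=0$, and computes $\Delta(f^2)$ inside the decomposition $P_{2,2}=H_{2,2}\oplus r^2H_{1,1}\oplus\R\cdot r^4$; the nonvanishing of the $H_{1,1}$-component comes down to the fact that $|\lambda_i|$ cannot be constant with zero sum over an odd number of indices. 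You instead invoke invariant theory: the symmetric trilinear form $(A_1,A_2,A_3)\mapsto\int f_{A_1}f_{A_2}f_{A_3}\,dV$ on traceless Hermitian matrices must be a multiple of the symmetrized cubic trace (one-dimensionality of cubic invariants of the adjoint representation for $2n+1\geq 3$), so $\int f_A^2f_B\,dV=c\,\trace(A^2B)$, and the parity argument $A^2=\alpha I$, $\trace A=0$, odd size $\Rightarrow A=0$ is the coordinate-free version of the paper's final contradiction. Both routes hinge on the same underlying identity (the $E(2\mu)$-projection of $f_A^2$ corresponds to the traceless part of $A^2$) and the same odd-dimension parity; your version is more structural and makes transparent why $\CP^{2n}$ rather than $\CP^{2n+1}$ is singled out, but it requires two extra inputs the paper's direct computation avoids: the representation-theoretic uniqueness of the invariant and the check $c\neq 0$, which you correctly reduce to a sphere-moment computation with $A=\mathrm{diag}(2n,-1,\ldots,-1)$ (and which does come out nonzero, proportional to $\trace(A^3)$). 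Minor bookkeeping: the paper cites the Cao--He table rather than Koiso for the absence of IED, and you should state explicitly that the non-integrability is up to second order, so that Lemma \ref{rigidity} applies; with those cosmetic adjustments your argument is complete.
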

\section{Notation and conventions}
 Throughout, any manifold $M$ will be compact and any metric considered on $M$ will be smooth. The dimension of $M$ will be $n$.
For the Riemann curvature tensor, we use the sign convention such that
 $R_{X,Y}Z=\nabla^2_{X,Y}Z-\nabla^2_{Y,X}Z$. Given a fixed metric, we equip the bundle of $(r,s)$-tensor fields (and any subbundle) with the natural pointwise scalar product induced by the metric.
By $S^pM$, we denote the bundle of symmetric $(0,p)$-tensors.
Let $f\in C^{\infty}(M)$. We introduce some differential operators weighted by $f$. The $f$-weighted Laplacian (or Bakry-Emery Laplacian) acting on $C^{\infty}(S^pM)$ is
\begin{align*}
 \Delta_f h=-\sum_{i=1}^n \nabla^2_{e_i,e_i}h+\nabla^2_{\gradient f}h.
\end{align*}
By the sign convention, $\Delta_f=(\nabla^*_f)\nabla$, where $\nabla^*_f$ is the adjoint of $\delta$ with respect to the weighted $L^2$-scalar product $\int_M\langle.,.\rangle \edv$.
The weighted divergence $\delta_f:C^{\infty}(S^pM)\to C^{\infty}(S^{p-1}M)$ and its formal adjoint $\delta_f^*\colon C^{\infty}(S^{p-1}M)\to C^{\infty}(S^pM)$ with respect to the weighted scalar product are given by
\begin{align*}\delta_f T(X_1,\ldots,X_{p-1})=&-\sum_{i=1}^n\nabla_{e_i}T(e_i,X_1,\ldots,X_{p-1})+T(\gradient f,X_1,\ldots,X_{p-1}),\\
            \delta_f^*T(X_1,\ldots,X_p)=&\frac{1}{p}\sum_{i=0}^{p-1}\nabla_{X_{1+i}}T(X_{2+i},\ldots,X_{p+i}),
\end{align*}
where the sums $1+i,\ldots,p+i$ are taken modulo $p$. If $f$ is constant, we recover the usual notions of Laplacian and divergence. In this case, we will drop the $f$ in the notation.
For $\omega\in\Omega^1(M)$, we have $\delta_f^*\omega=\frac{1}{2}L_{\omega^{\sharp}}g$ where $\omega^{\sharp}$ is the vector field dual to $\omega$ and $L$ is the Lie derivative.
Thus, $\delta_f^*(\Omega^1(M))$ is the tangent space of the manifold $g\cdot \Diff(M)=\left\{\varphi^*g|\varphi\in\Diff(M)\right\}$.
   \section{The shrinker entropy}\label{shrinkerentropy}
Let $g$ be a Riemannian metric, $f\in C^{\infty}(M)$, $\tau>0$ and define
\begin{align*}\mathcal{W}(g,f,\tau)=\frac{1}{(4\pi\tau)^{n/2}}\int_M [\tau(|\nabla f|^2_g+\scal_g)+f-n]\edv,
\end{align*}
where $\scal_g$ is the scalar curvature of $g$.
Let\index{$\mu_(g,\tau)$}
 \begin{align*}\mu(g,\tau)&=\inf \left\{\mathcal{W}(g,f,\tau)\suchthat{f\in C^{\infty}(M),\frac{1}{(4\pi\tau)^{n/2}}\int_M \edv_g=1}f\in C^{\infty}(M),\frac{1}{(4\pi\tau)^{n/2}}\int_M \edv_g=1\right\}.
 \end{align*}
For any fixed $\tau>0$, the infimum is finite and is realized by a smooth function \cite[Lemma 6.23 and 6.24]{CC07}.
We define the shrinker entropy\index{shrinker entropy} as
\begin{align*}\nu(g)&=\inf \left\{\mu(g,\tau)\mid\tau>0\right\}.
\end{align*}
This functional was first introduced in the pioneering work of Perelman \cite{Per02}.
If the smallest eigenvalue of the operator $4\Delta_g+\scal_g$ is positive,
 $\nu(g)$ is finite and realized by some $\tau_g>0$ \cite[Corollary 6.34]{CC07}. By construction, $\nu$ is scale and diffeomorphism invariant.
Its first variation is 
\begin{align*}\nu(g)'(h)=-\frac{1}{(4\pi\tau_g)^{n/2}}\int_M\left\langle \tau_g(\ric+\nabla^2 f_g)-\frac{1}{2}g,h\right\rangle\text{ }e^{-f_g}dV_g,
\end{align*}
where $(f_g,\tau_g)\in C^{\infty}(M)\times \R_+$ is a pair realizing $\nu(g)$ (see e.g.\ \cite[Lemma 2.2]{CZ12}).
The critical points of $\nu$ are precisely the shrinking Ricci solitons. By the above, these are the metrics for which we have the equation
\begin{align}\label{Riccisoliton}
 \ric+\nabla^2 f_g=\frac{1}{2\tau_g}g.
\end{align}
For any Ricci soliton $g$, there exists a $C^{2,\alpha}$-neighbourhood $\mathcal{U}$ in the space of metrics on which $\nu$ depends analytically on the metric. Moreover,
the minimizers $f_g,\tau_g$ are unique on $\mathcal{U}$ and depend analytically on the metric \cite[Lemma 4.1]{Kro14}. In the particular case of an Einstein metric, $f_g$ is constant and $\tau_g=\frac{1}{2\mu}$ where $\mu$ is the Einstein constant.
\begin{prop}[{\cite{CHI04,CZ12}}]\label{secondnu}Let $(M,g)$ be a shrinking Ricci soliton. Then the second variation of $\nu$ at $g$ is given by
 \begin{align}
  \nu''_{g}(h)=\frac{\tau}{(4\pi\tau)^{n/2}}\int_M \langle N h,h\rangle \edv,
 \end{align}
where $(f,\tau)$ is the minimizing pair realizing $\nu$. The stability operator $N$ is given by
\begin{align}
 Nh=-\frac{1}{2}\Delta_f h+\mathring{R}h+\delta^*_f(\delta_f(h))+\frac{1}{2}\nabla^2 v_h-\ric \frac{\int_M \langle \ric,h\rangle \edv}{\int_M \scal \edv}.
\end{align}
Here, $\mathring{R}h(X,Y)=\sum_{i=1}^nh(R_{e_i,X}Y,e_i)$ and $v_h$ is the unique solution of
\begin{align}\label{v_h}
 (-\Delta_f+\frac{1}{2\tau})v_h=\delta_f(\delta_f(h)).
\end{align}
\end{prop}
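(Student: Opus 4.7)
The plan is to differentiate Cao--Zhu's first variation formula once more, and to exploit the pointwise vanishing of the gradient at a soliton in order to decouple the computation. First I would rewrite the first variation at an arbitrary nearby metric as
\begin{align*}
\nu'(g)(h) = -\frac{\tau_g}{(4\pi\tau_g)^{n/2}}\int_M \langle E(g), h\rangle e^{-f_g}dV_g, \qquad E(g) := \ric_g + \nabla^2_g f_g - \tfrac{1}{2\tau_g}g,
\end{align*}
and observe that at a soliton $E(g)\equiv 0$. Differentiating this identity in the direction $h$ along $g_t = g + th$, every contribution coming from differentiating the prefactor $\tau_g/(4\pi\tau_g)^{n/2}$, the weighted measure $e^{-f_g}dV_g$, or the inserted tensor $h$ is multiplied pointwise by $E(g)=0$ and drops out. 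This reduces the problem to
\begin{align*}
\nu''_g(h) = -\frac{\tau}{(4\pi\tau)^{n/2}} \int_M \langle E'(h), h\rangle \edv, \qquad E'(h) := \tfrac{d}{dt}\big|_{t=0} E(g_t),
\end{align*}
so I only need to show $E'(h) = -Nh$ up to a symmetric part that pairs identically with $h$.

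Next I would linearise $E$ piece by piece. The Ricci linearisation is the classical Lichnerowicz formula expressing $\ric'(h)$ in terms of $\Delta_L h = \Delta h + 2\mathring{R}h - \ric\circ h - h\circ\ric$, $\delta^*(\delta h)$, and $\nabla^2\trace h$. The Hessian $\nabla^2_{g_t} f_{g_t}$ produces two contributions: $\nabla^2 f'(h)$, where $f'(h) := \tfrac{d}{dt}|_0 f_{g_t}$, plus a Christoffel-variation term applied to $\nabla f$. Using the soliton identity $\nabla f = ( \tfrac{1}{2\tau}g - \ric )^{\sharp}$, the Christoffel piece combines with $\delta^*(\delta h)$ into $\delta_f^*(\delta_f h)$, while a standard Bochner rewriting converts the unweighted Laplacian into $\Delta_f = \Delta + \nabla^2_{\nabla f}(\cdot)$ and absorbs the $\ric\circ h$ contractions inside $\Delta_L$, leaving the pure curvature term $\mathring R h$. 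The remaining piece $-\tfrac{1}{2\tau_{g_t}}g_t$ contributes $\tfrac{\tau'(h)}{2\tau^2}g - \tfrac{1}{2\tau}h$, and the $g$-factor pairs with $h$ only through $\trace h$.

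The final task is to pin down $f'(h)$ and $\tau'(h)$. These come from linearising the two conditions characterising the minimiser $(f_g,\tau_g)$: the Euler--Lagrange equation $\tau(-2\Delta f + |\nabla f|^2 + \scal) + f - n = \nu(g)$ and the normalisation $\int_M e^{-f}dV = (4\pi\tau)^{n/2}$. Differentiating both and using the soliton equation to eliminate $\scal$, $\Delta f$ and $|\nabla f|^2$, one obtains a linear elliptic equation for $f'(h)$ that, after a Bochner-type manipulation, reduces to $(-\Delta_f + \tfrac{1}{2\tau}) f'(h) = -\tfrac{1}{2}\delta_f(\delta_f h) + \mathrm{const}$; comparing with \eqref{v_h} identifies $f'(h)$ with $-\tfrac{1}{2}v_h$ up to a constant. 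Tracing the soliton equation and integrating against $h\,e^{-f}dV$ similarly expresses $\tau'(h)$ as a ratio, whose contribution to $E'(h)$ is precisely the projection term $-\ric\cdot\int_M\langle\ric,h\rangle\edv/\int_M\scal\edv$. Assembling everything yields $E'(h) = -Nh$ and hence the stated formula. The main obstacle is precisely this last round of bookkeeping: ensuring that the Christoffel variations, the Bochner rewriting of $\Delta_L$ as $-2\Delta_f + \ldots$, and the elliptic identification of $f'(h)$ as $-\tfrac{1}{2}v_h$ all slot together with compatible signs and curvature contractions; without the first step's decoupling one would also have to track interaction terms between $\tau'(h)$, $f'(h)$ and the measure, which would make the symmetry of $N$ far less transparent.
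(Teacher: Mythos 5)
The paper itself gives no proof of this proposition -- it is quoted from \cite{CHI04,CZ12} -- so your outline has to be measured against the proof in those references, and structurally you follow exactly that route: differentiate the first variation, use that the soliton tensor vanishes identically at $g$ so that only the linearization $E'(h)$ survives, linearize $\ric$, $\nabla^2 f_g$ and $\frac{1}{2\tau_g}g$, and determine $f'(h)$, $\tau'(h)$ from the Euler--Lagrange equation and the normalization $\int_M e^{-f}dV=(4\pi\tau)^{n/2}$. That architecture is sound and is the standard one.

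However, your central identification is wrong, and it is precisely the step that disposes of the trace terms. You claim $(-\Delta_f+\frac{1}{2\tau})f'(h)=-\frac12\delta_f\delta_f h+\mathrm{const}$, hence $f'(h)=-\frac12 v_h+\mathrm{const}$. Test this at an Einstein metric ($f$ constant, $\tau=\frac{1}{2\mu}$) with $h=v\cdot g$, $\Delta v=\lambda v$, $\lambda>0$: linearizing the Euler--Lagrange equation (using $\scal'(vg)=(n-1)\Delta v-\scal\, v$ and $D\tau(vg)=0$) gives $f'(h)=\frac{n\mu-(n-1)\lambda}{2(\mu-\lambda)}\,v$, whereas $v_h=\frac{\lambda}{\lambda-\mu}\,v$; these satisfy $f'(h)=-\frac12 v_h$ only when $\lambda=\mu$ (in particular not for $\lambda=2\mu$, the case used later in the paper). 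The correct relation is $f'(h)=\frac12(\trace h-v_h)+\mathrm{const}$, i.e.\ $v_h=\trace h-2f'(h)$ up to a constant (and, at a non-Einstein soliton, up to a multiple of $f$ coming from $\tau'(h)$). This is exactly the term your bookkeeping leaves dangling: the linearization of $\ric$ contributes $-\frac12\nabla^2\trace h$, which appears nowhere in $N$, and it is cancelled only because $\frac12\nabla^2 v_h=\frac12\nabla^2\trace h-\nabla^2 f'(h)+\ldots$; with your identification the $\nabla^2\trace h$ term survives and the stated formula for $N$ does not come out. Relatedly, at a genuinely non-Einstein soliton the variation of $-\frac{1}{2\tau_{g_t}}g_t$ produces $\frac{\tau'(h)}{2\tau^2}g$, which is proportional to $g$, not to $\ric$; the projection term $-\ric\,\frac{\int_M\langle\ric,h\rangle e^{-f}dV}{\int_M\scal\, e^{-f}dV}$ only emerges after combining this with the leftover Hessian-of-$f$ piece via $\ric=\frac{1}{2\tau}g-\nabla^2 f$ and the formula for $\tau'(h)$ from \cite[Lemma 2.4]{CZ12}, so "precisely the projection term" needs that extra step. (A minor further slip: with the paper's sign convention for $\Delta$, the Euler--Lagrange equation reads $\tau(-2\Delta f-|\nabla f|^2+\scal)+f-n=\nu(g)$, i.e.\ your $|\nabla f|^2$ has the wrong sign.)
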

\begin{rem}
 The operator $N:C^{\infty}(S^2M)\to C^{\infty}(S^2M)$ is formally self-adjoint with respect to $L^2(e^{-f}dV)$. Since $\nu$ is scale and diffeomorphism invariant, $N$ vanishes on
\begin{align*}
 W:=\R\cdot \ric\oplus \delta_f^*(\Omega^1(M)).
\end{align*}

\end{rem}

\section{The moduli space of Ricci solitons}\label{modulispace}
Let $\mathcal{M}$ be the set of smooth metrics on $M$ and assume that $s\in\N$ satisfies $s\geq [\frac{n}{2}]+3$.
According to \cite{PS13}, we introduce the following notions:
\begin{defn}
 A shrinking Ricci soliton $g$ is called rigid, if there exists a $H^s$-neighbourhood $\mathcal{U}\subset\mathcal{M}$ such that any shrinking Ricci soliton $\tilde{g}\in\mathcal{U}$ is homothetic to $g$, i.e.\ there exist $\lambda>0$ and $\varphi\in\Diff(M)$ such that
$g=\lambda\varphi^*\tilde{g}$. Analoguously, an Einstein metric $g$ is called rigid, if there exists a  $H^s$-neighbourhood $\mathcal{U}\subset\mathcal{M}$ such that any Einstein metric in $\mathcal{U}$ is homothetic to $g$.
An Einstein metric is called weakly solitonic rigid, if there exists a  $H^s$-neighbourhood $\mathcal{U}\subset\mathcal{M}$ such that any Ricci soliton in $\mathcal{U}$ is Einstein.
\end{defn}
\noindent
Let $f\in C^{\infty}(M)$ and $g\in\mathcal{M}$. In \cite{PS13}, Podest{\`a} and Spiro construct a set $\mathcal{S}^s_{f}\subset\mathcal{M}$ satisfying the following properties:
\begin{itemize}
 \item There exists a small $H^s$-neighbourhood $\mathcal{U}$ of $g$ in the set of metrics such that any $\tilde{g}\in\mathcal{U}$ is isometric to a unique metric $\hat{g}\in\mathcal{S}^s_{f}$.
 \item $\mathcal{S}^s_{f}$ is a smooth manifold with tangent space $T_{g}\mathcal{S}^s_{f}=\kernel(\delta_{g,f})$.
\end{itemize}
Such a set is called an $f$-twisted slice. If $f\equiv0$, we recover the slice constructed in Ebin's slice theorem \cite[Theorem 7.1]{Eb70}.

Now the question, whether all Ricci solitons in a neighbourhood of a given Ricci soliton $g$ are homothetic to $g$ reduces to the question whether $g$ is an isolated point in
\begin{align*}
 \mathcal{S}ol^s_{g}=\left\{\tilde{g}\in\mathcal{S}^s_{f_g}\text{ }\bigg\vert \text{ }\ric_{\tilde{g}}+\nabla^2f_{\tilde{g}}=\frac{1}{2\tau_g}\cdot \tilde{g}\right\}.
\end{align*}
Note that we fixed the constant $\tau_{g}$ in order to avoid rescalings of metrics.
Let $g_t$ a $C^1$-curve in $\mathcal{S}ol^s_g$. Then
 we have \begin{align*}
          \frac{d}{dt}\bigg\vert_{t=0}\tilde{g}_t=h\in  V:=\left\{h\in C^{\infty}(S^2M)\text{ }\bigg\vert \text{ }\delta_f(h)=0 \text{ and }\int_M \langle\ric,h\rangle \edv=0\right\}
\end{align*}
 and $\nu''(h)=0$ because $\nu$ is constant along $\tilde{g}_t$.
The space $V$ is the $L^2(e^{-f}dV)$-orthogonal complement of the space $W$ defined above.
 This motivates the following definition:
\begin{defn}\label{DefnISD}
 Let $(M,g)$ be a gradient shrinking Ricci soliton and let $N$ be the stability operator of Proposition \ref{secondnu}. We call $h\in C^{\infty}(S^2M)$ an infinitesimal solitonic deformation if $h\in V$ and $N(h)=0$. An infinitesimal solitonic deformation is called integrable if there exists a $C^1$-curve of Ricci solitons $g_t$ through $g=g_0$
such that $\frac{d}{dt}|_{t=0}g_t=h$.
\end{defn}
In general, one cannot expect that $ \mathcal{S}ol^s_g$ is a manifold, but the following holds \cite[Theorem 3.4]{PS13}: There exists an analytic finite-dimensional submanifold $\mathcal{Z}^s\subset\mathcal{S}^s_{f_g}$
such that $T_g\mathcal{Z}^s=\kernel{N|_V}$ and $ \mathcal{S}ol^s_g$ is an analytic subset of $\mathcal{Z}^s$.
If all infinitesimal solitonic deformations are integrable, we have $ \mathcal{S}ol^s_g=\mathcal{Z}^s$ (possibly after passing to smaller neighbourhoods).

\begin{defn}[{\cite[p.\ 347]{Bes08}}]Let $(M,g)$ be an Einstein manifold. We call $h\in C^{\infty}(S^2M)$ an infinitesimal Einstein deformation if $\delta h=0$, $\trace h=0$ (i.e.\ $h$ is a transverse traceless tensor) and $\nabla^*\nabla h-2\mathring{R}h=0$. An infinitesimal Einstein deformation is called integrable if there exists a $C^1$-curve of Einstein metrics $g_t$ through $g=g_0$
such that $\frac{d}{dt}|_{t=0}g_t=h$.
\end{defn}
The set of Einstein metrics close to $g$ admits similar properties as the set of Ricci solitons \cite[Theorem 3.1]{Koi83}.
Suppose that $g$ is an Einstein manifold with constant $\mu$.
 Let $\mathcal{S}^s$ be the slice in the space of metrics as constructed in Ebin's slice theorem. Then, there exists an analytic finite-dimensional manifold
$\mathcal{W}^s\subset \mathcal{S}^s$ such that $T_g\mathcal{W}^s=\kernel((\nabla^*\nabla-2\mathring{R})|_{TT})$ and
\begin{align*}
 \mathcal{E}_g^s=\left\{\tilde{g}\in\mathcal{S}^s|\ric_{\tilde{g}}=\mu\tilde{g}\right\}
\end{align*}
is an analytic subset of $\mathcal{W}^s$.
Let $IED$ be the space of infinitesimal Einstein deformations and $ISD$ be the space of infinitesimal solitonic deformations. Then we have
\begin{align*}
 ISD=IED\oplus \left\{\mu v\cdot g+\nabla^2 v|v\in C^{\infty}(M),\Delta v=2\mu v\right\},
\end{align*}
see \cite[Lemma 6.2]{Kro14}. Now we prove the statement about weak solitonic rigidity stated in the introduction:
\begin{proof}[Proof of Proposition \ref{weakrigidity}]
 Suppose that $g$ is not weakly solitonic rigid. Then we have a sequence of nontrivial Ricci solitons $g_i\to g$ in $H^s$. For nontrivial Ricci solitons, it is well-known that $\frac{1}{\tau_{g_i}}\in\spectrum(\Delta_{f_{g_i}})$ and an eigenfunction is given by $f_{g_i}-\frac{n}{2}-\nu(g_i)$.
 This follows from the Euler-Lagrange equation for $f_{g_i}$ \cite[p.\ 5]{CZ12}.
For the $k$th eigenvalue of $\Delta_{f_g}$, we have the minimax characterization
\begin{align*}
 \lambda_k(\Delta_{f_g})=\min_{\substack{U\subset C^{\infty}(M)\\ \dimn{U}=k}}\max_{\substack{u\in U\\u\neq0}}\frac{\int_M |\nabla u|^2 \efgdv_g}{\int_M u^2 \efgdv_g}
\end{align*}
which shows that the spectrum of $\Delta_{f_g}$ depends continuously on the metric with respect to the $C^{2,\alpha}$-topology.
By Sobolev embedding, $H^s$-continuity follows.
 This implies that $\frac{1}{\tau_g}=2\mu\in\spectrum(\Delta)$ and the proof is finished by contradiction.
\end{proof}
\begin{rem}
 This proposition generalizes \cite[Proposition 5.3]{PS13}, where only the case of K\"ahler-Einstein manifolds is considered.
\end{rem}
\section{Non-integrability of solitonic deformations}\label{nonintegrability}
In this section we use similar arguments as in \cite{Koi82}, where the integrability of infinitesimal Einstein deformations was discussed. 
Consider the tensor
\begin{align*}
 \Sol_g=\tau_g(\ric_g+\nabla^2 f_g)-\frac{g}{2}.
\end{align*}
Obviously, the zero set of the map $ g\mapsto \Sol_g$ equals the set of shrinking Ricci solitons. This map is well-defined and analytic in an open $C^{2,\alpha}$-neighbourhood of a given Ricci soliton.
Suppose now, we have a smooth curve of Ricci solitons $g_t$. From differentiating the equation $\Sol=0$ along $g_t$ twice, we have
\begin{align*}
 D\Sol(g^{(1)})=0,\qquad D\Sol(g^{(2)})+D^2\Sol(g^{(1)},g^{(1)})=0.
\end{align*}
Here, $D^k\Sol$ denotes the k'th Fr\'{e}chet derivative of the map $g\mapsto\Sol_g$ and $g^{(k)}$ denotes the k'th derivative of the curve $g_t$. More generally, from differentiating $k$ times, we obtain
\begin{align}\label{recursion0}
 D\Sol(g^{(k)})+\sum_{l=2}^{k}\sum_{\substack{ 1\leq k_1\leq\ldots\leq k_l\\ k_1+\ldots+k_l=k}} C(k,l,k_1,\ldots,k_l) D^l\Sol(g^{(k_1)},\ldots,g^{(k_l)})=0
\end{align}
where $C(k,l,k_1,\ldots,k_l)\in\N$ are constants only depending on $k,l,k_1,\ldots,k_l$.
Let now $g$ be a Ricci soliton and $h$ an infinitesimal solitonic deformation. Suppose that $h$ is integrable.
By projecting to the slice and rescaling, we may assume that there is a curve $g_t$ in the set $\mathcal{S}ol^s_g$ such that $g_0^{(1)}=h$.
 By analyticity of the set $\mathcal{S}ol^s_g$ we may also assume $g_t$ to be analytic.
Then the higher derivatives of the curve nessecarily satisfy \eqref{recursion0}.
\begin{defn}
 Let $(M,g)$ be a shrinking Ricci soliton and $h=:g^{(1)}\in ISD$. We call $h$ integrable up to order $k$, if there exists a sequence of tensors $g^{(2)},\ldots,g^{(k)}\in C^{\infty}(S^2M)$ satisfying the recursion formulas $\eqref{recursion0}$.
If $h$ is integrable up to order $k$ for any $k\in\N$, $h$ is integrable of infinite order.
\end{defn}
\begin{lem}
 A tensor $h\in ISD$ is integrable in the sense of Definition \ref{DefnISD} if and only if it is integrable of infinite order.
\end{lem}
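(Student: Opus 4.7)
My plan is to pass to the finite-dimensional analytic variety $\mathcal{S}ol^s_g \subset \mathcal{Z}^s$ provided by the slice theorem and handle the two directions separately.

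For the ``only if'' direction, the argument is essentially the one sketched just above the definition. Assuming $h$ is integrable, there is a $C^1$ curve $g_t$ of Ricci solitons with $g_0^{(1)} = h$. Projecting via Podest{\`a} and Spiro's slice theorem gives a $C^1$ curve inside $\mathcal{S}ol^s_g$ still tangent to $h$ (the Lie-derivative correction lies in $\delta^*_{f_g}(\Omega^1(M)) \subset W$, which is $L^2(e^{-f}dV)$-orthogonal to $V \supset ISD$). Since $\mathcal{S}ol^s_g$ is a real analytic subset of the finite-dimensional analytic manifold $\mathcal{Z}^s$, I would invoke a standard fact on arcs in real analytic sets to replace this $C^1$ arc by an analytic arc $\tilde g_t$ with the same tangent direction at $g$. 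Differentiating the identity $\Sol(\tilde g_t) = 0$ to all orders at $t = 0$ then yields the recursion \eqref{recursion0}.

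For the converse, given tensors $g^{(k)}$ satisfying \eqref{recursion0}, I would assemble them into the formal power series $\hat g(t) = g + \sum_{k\geq 1}\tfrac{t^k}{k!}\, g^{(k)}$, which by construction satisfies $\Sol(\hat g(t)) = 0$ to all formal orders. Since the slice projection is analytic near $g$ and $\mathcal{S}ol^s_g$ is cut out by finitely many real analytic equations inside the finite-dimensional manifold $\mathcal{Z}^s$ (with $T_g \mathcal{Z}^s = \kernel(N|_V) = ISD$), the formal series descends to a formal power series solution of this finite system in local analytic coordinates. Artin's approximation theorem for formal solutions of real analytic equations in finitely many variables then produces a convergent analytic curve of Ricci solitons in $\mathcal{S}ol^s_g$ agreeing with $\hat g(t)$ to first order; this is in particular a $C^1$ curve tangent to $h$, establishing integrability in the sense of Definition \ref{DefnISD}.

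The main technical obstacle is the bookkeeping in the reduction to $\mathcal{Z}^s$: one must verify that the formal series from the infinite-dimensional recursion projects coherently to a formal series in local analytic coordinates on $\mathcal{Z}^s$, and that Artin approximation can be arranged to preserve the first-order jet $h$. Both points hinge on the analyticity of the slice projection and on the identification $T_g\mathcal{Z}^s = ISD$, which places $h$ exactly in the finite-dimensional subspace where the approximation takes place.
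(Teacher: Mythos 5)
Your proposal follows essentially the same route as the paper: for the forward direction, projecting the curve into the slice and using the analyticity of $\mathcal{S}ol^s_g\subset\mathcal{Z}^s$ to upgrade to an analytic arc whose derivatives satisfy \eqref{recursion0}, and for the converse, transferring the formal power series solution to the finite-dimensional analytic set $\mathcal{Z}^s$ and applying Artin's approximation theorem to produce an actual curve of solitons tangent to $h$. The points you flag as technical obstacles (coherent projection of the jets to $\mathcal{Z}^s$ and matching the first-order term) are treated at the same level of detail in the paper's own argument, so the proposal is correct.
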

\begin{proof}
 If $h$ is integrable, it is obviously integrable of infinite order. Conversely, suppose we have $\tilde{g}^{(k)}\in C^{\infty}(S^2M)$, $k\in\N$, solving $\eqref{recursion0}$ for any $k$.
 Then for any $k\in\N$, we can construct a curve $\tilde{g}_t^k$ of metrics such that $\frac{d^l}{dt^l}|_{t=0}\tilde{g}^k_t=\tilde{g}^{(l)}$ and $\frac{d^l}{dt^l}\Sol_{\tilde{g}_t}=0$ for $l\in\left\{1,\ldots,k\right\}$ .
 By projecting these curves to the subset $\mathcal{Z}^s$ in the slice $\mathcal{S}_f^s$ suitably,
we have curves $g_t^k$ in $\mathcal{Z}^s$ and a sequence $g^{(k)}\in C^{\infty}(S^2M)$ such that for each $k\in\N$, we have $\frac{d^l}{dt^l}|_{t=0}g^k_t=g^{(l)}$ and $\frac{d^l}{dt^l}\Sol_{g_t}=0$ for $l\in\left\{1,\ldots,k\right\}$
and $g^{(k)}$ satisfies \eqref{recursion0}. Therefore, 
since $\mathcal{S}ol^s_g\subset \mathcal{Z}^s$ is an analytic subset, we can apply \cite[Theorem (1.2)]{Art68}:
The existence of the formal solution $g+\sum_{k=1}^{\infty}\frac{t^k}{k!}g^{(k)}$ of the equation $\Sol=0$ implies the existence of a real solution $g_t$ of $\Sol=0$ in $\mathcal{Z}^s$ such that $\frac{d}{dt}|_{t=0}g_t=g^{(1)}=h$.
Thus, $h$ is integrable.
\end{proof}

\begin{lem}\label{rigidity}
 Let $(M,g)$ be a shrinking Ricci soliton. If all $h\in ISD$ are integrable only up to some finite order, then $g$ is rigid.
\end{lem}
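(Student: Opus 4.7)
The plan is to argue by contraposition: assuming $g$ is not rigid, I will exhibit an $h\in ISD$ that is integrable, which by the preceding lemma means $h$ is integrable of infinite order, contradicting the hypothesis.

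First, I reduce non-rigidity to a non-isolated-point statement inside the slice. If $g$ is not rigid, there is a sequence $\tilde{g}_i\to g$ in $H^s$ of shrinking Ricci solitons, none homothetic to $g$. Since $\tau_{\lambda g}=\lambda\tau_g$ under rescaling, I rescale each $\tilde{g}_i$ by a factor $\lambda_i\to 1$ to arrange $\tau_{\lambda_i\tilde{g}_i}=\tau_g$, and then apply the Podest\`a--Spiro $f_g$-twisted slice theorem to replace each metric by an isometric representative $g_i\in\mathcal{S}^s_{f_g}$. These representatives satisfy the soliton equation with the normalized parameter, hence $g_i\in\mathcal{S}ol^s_g$, and non-homotheticity to $g$ forces $g_i\neq g$. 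Thus $g$ is a non-isolated point of $\mathcal{S}ol^s_g$.

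Next, I extract an integrable direction. By \cite[Theorem 3.4]{PS13}, $\mathcal{S}ol^s_g$ is a real-analytic subset of the finite-dimensional analytic manifold $\mathcal{Z}^s$, whose tangent space at $g$ is precisely $ISD=\kernel(N|_V)$. Since $g$ is non-isolated in this analytic set, the real-analytic curve selection lemma yields an analytic arc $g_t\subset\mathcal{S}ol^s_g$ with $g_0=g$ and $g_t\neq g$ for all small $t>0$. A Puiseux-type reparameterization within local analytic coordinates on $\mathcal{Z}^s$ then allows me to assume that $h:=g_0'\in T_g\mathcal{Z}^s=ISD$ is nonzero. This curve exhibits $h$ as the tangent vector of a (real-analytic, hence $C^1$) curve of Ricci solitons through $g$, so $h$ is integrable in the sense of Definition \ref{DefnISD}. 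The preceding lemma then forces $h$ to be integrable of infinite order, contradicting the hypothesis. Therefore $g$ is rigid.

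The main obstacle is the reparameterization step: curve selection on its own only guarantees some analytic arc, whose Taylor expansion at $t=0$ may begin at a higher order. Promoting this to an arc with non-vanishing initial velocity relies on the analytic-geometric structure of $\mathcal{S}ol^s_g$ inside the finite-dimensional smooth ambient $\mathcal{Z}^s$ and echoes the strategy used by Koiso \cite{Koi82} for integrability of infinitesimal Einstein deformations; all other steps are bookkeeping using the slice theorem and the identification $T_g\mathcal{Z}^s=ISD$.
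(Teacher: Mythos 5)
Your strategy coincides with the paper's own proof: the paper's argument is exactly that non-isolatedness of $g$ in $\mathcal{S}ol^s_g$, together with analyticity of this set inside $\mathcal{Z}^s$, yields a curve of solitons through $g$ whose initial velocity is an element of $ISD$ integrable of infinite order. Your first paragraph supplies the (correct) reduction from non-rigidity to non-isolatedness in the slice, and your second paragraph makes the curve-selection step explicit, so in terms of route you are aligned with the source, only more detailed.

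The step you yourself single out as ``the main obstacle'' is, however, a genuine gap, and the proposed remedy does not close it. Curve selection only yields an analytic arc $g_t=g+t^m a_m+O(t^{m+1})$ with $a_m\neq0$, and in general no reparameterization produces a two-sided $C^1$ curve inside the analytic set with nonzero velocity, nor is the leading coefficient integrable of infinite order. A model computation shows this: for the analytic equation $y^2-x^3=0$ in $\R^2$ the origin is non-isolated and the linearization vanishes, yet every nonzero kernel direction is obstructed via \eqref{recursion0} --- directions with nonzero $y$-component at order two, the direction $(1,0)$ at order three --- and the Puiseux arc $(s,s^{3/2})$ is only one-sidedly $C^1$ and not $C^2$, so it cannot be fed into the equivalence ``integrable in the sense of Definition \ref{DefnISD} $\Leftrightarrow$ integrable of infinite order.'' Thus the abstract scheme you invoke does not deliver a nonzero $h\in ISD$ integrable of infinite order. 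What the arc does give is this: since $g^{(j)}=0$ for $j<m$, the coefficient $a_m$ lies in $T_g\mathcal{Z}^s=ISD$, and differentiating $\Sol_{g_t}=0$ exactly $2m$ times leaves $D\Sol(g^{(2m)})+c\,D^2\Sol(a_m,a_m)=0$ for some $c>0$, i.e.\ the nonzero deformation $a_m$ is integrable up to second order. So what your argument actually proves is the weaker statement: if no nonzero $h\in ISD$ is integrable up to second order, then $g$ is rigid --- which is all that is needed for $\CP^{2n}$, since Theorem \ref{secondordercriterion} provides second-order obstructions. Be aware that the paper's own two-line proof glosses over the same point (it tacitly assumes the selected curve has nonvanishing velocity), but as written your jump from the reparameterized arc to infinite-order integrability of a nonzero element of $ISD$ is not justified.
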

\begin{proof}
 If $g$ was not isolated, the analyticity of $\mathcal{S}ol^s_g$ implies the existence of a smooth curve $g_t\subset\mathcal{S}ol^s_g$ through $g$ and $h=\frac{d}{dt}|_{t=0}g_t\in ISD$ is integrable of order infinity.
\end{proof}
By the second variation of $\nu$, $D\Sol=-\tau N$ where $N$ is the stability operator. Thus the operator
$D\Sol:C^{\infty}(S^2M)\to C^{\infty}(S^2M)$ is formally self-adjoint with respect to the $L^2(e^{-f_g}dV)$-scalar product. Provided we already found $g^{(1)},\ldots g^{(k-1)}$ recursively via \eqref{recursion0},
the equation \eqref{recursion0} for $g^{(k)}$ has a solution if and only if
\begin{align*}
 \sum_{l=2}^{k}\sum_{\substack{ 1\leq k_1\leq\ldots\leq k_l\\ k_1+\ldots+k_l=k}} C(k,l,k_1,\ldots,k_l) D^l\Sol(g^{(k_1)},\ldots,g^{(k_l)})\in \kernel(D\Sol)^{\perp}.
\end{align*}
In the above line and in the next lemma, the orthogonal complement is taken with respect to $L^2(e^{-f_g}dV)$.
\begin{lem}
 Let $(M,g)$ be a Ricci soliton and $h\in ISD$. Then $h$ is integrable up to second order if and only if $D^2\Sol(h,h)\perp ISD$. 
\end{lem}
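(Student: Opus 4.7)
The plan is to reduce integrability at order two to a Fredholm orthogonality statement and then extract the missing orthogonalities from the variational invariances of $\nu$. For $k=2$ the recursion \eqref{recursion0} takes the form $D\Sol(g^{(2)})+D^2\Sol(h,h)=0$, so $h$ is integrable up to order two if and only if $-D^2\Sol(h,h)$ lies in $\image(D\Sol)$. Since $D\Sol=-\tau N$ is formally self-adjoint with respect to $L^2(e^{-f}dV)$ and, restricted to $V=W^{\perp}$, is elliptic (the $\delta_f^*\delta_f$- and Ricci-correction terms of $N$ vanish on $V$, leaving $-\tfrac{1}{2}\Delta_f+\mathring{R}$ up to lower-order operators), it has closed range with $\image(D\Sol)=\kernel(D\Sol)^{\perp}$ in the weighted $L^2$-pairing. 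Hence integrability at order two is equivalent to $D^2\Sol(h,h)\perp\kernel(D\Sol)$.

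By the Remark after Proposition~\ref{secondnu}, $N$ vanishes on $W$; combined with $ISD=\kernel(N|_V)$ and the orthogonal splitting $C^{\infty}(S^2M)=W\oplus V$, this gives $\kernel(D\Sol)=W\oplus ISD$. Moreover, the soliton equation $\ric=\tfrac{1}{2\tau}g-\nabla^2 f$ combined with $\nabla^2 f=\delta_f^*(df)\in\delta_f^*(\Omega^1(M))$ yields the alternative presentation $W=\R\cdot g\oplus\delta_f^*(\Omega^1(M))$. The lemma is therefore reduced to showing that, for every $h\in ISD$, the tensor $D^2\Sol(h,h)$ is automatically orthogonal to $g$ and to every $L_X g$.

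Both orthogonalities come from the scale- and diffeomorphism-invariance of $\nu$: for every metric $\tilde g$ in the neighbourhood and every $X\in\mathfrak{X}(M)$ one has $\nu_{\tilde g}'(\tilde g)=0=\nu_{\tilde g}'(L_X\tilde g)$, and via the first-variation formula from Section~\ref{shrinkerentropy} this translates into
\begin{align*}
\int_M\langle\Sol_{\tilde g},\tilde g\rangle_{\tilde g}\,e^{-f_{\tilde g}}\,dV_{\tilde g}=0,\qquad \int_M\langle\Sol_{\tilde g},L_X\tilde g\rangle_{\tilde g}\,e^{-f_{\tilde g}}\,dV_{\tilde g}=0.
\end{align*}
Taylor-expanding along $g_t=g+th+\tfrac{t^2}{2}g^{(2)}+O(t^3)$, at first order one recovers $\int\langle D\Sol(k),g\rangle e^{-f}dV=0$ and the analogue with $L_X g$, for every symmetric $(0,2)$-tensor $k$; at second order, the twin vanishings $\Sol_g=0$ and $D\Sol(h)=-\tau N(h)=0$ (the latter from $h\in ISD$) force every cross-term coming from the $\tilde g$-dependence of $\langle\cdot,\cdot\rangle_{\tilde g}$, $f_{\tilde g}$ or $dV_{\tilde g}$ to carry a vanishing factor, leaving precisely $\int\langle D^2\Sol(h,h),g\rangle e^{-f}dV=0$ and $\int\langle D^2\Sol(h,h),L_X g\rangle e^{-f}dV=0$. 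This is the desired orthogonality to $W$.

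The main technical point is the bookkeeping of this second-order Taylor expansion: one has to check that every cross-term produced by the variations of the inner product, the weight and the volume form contains either $\Sol_g$ or $D\Sol(h)$ as a factor, so that it vanishes under the ISD hypothesis. With this in hand, the condition $D^2\Sol(h,h)\in(W\oplus ISD)^{\perp}$ reduces to $D^2\Sol(h,h)\perp ISD$, completing the proof.
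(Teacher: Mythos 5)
Your proof is correct and is essentially the paper's own argument: reduce second-order integrability to $D^2\Sol(h,h)\perp\kernel(D\Sol)$ via formal self-adjointness of $D\Sol=-\tau N$, decompose $\kernel(D\Sol)=ISD\oplus\R\cdot g\oplus\delta_f^*(\Omega^1(M))$, and get orthogonality to the scaling and diffeomorphism directions from the invariance of $\nu$ together with $\Sol_g=0$ and $D\Sol(h)=0$. The only real difference is the packaging of that last step: the paper differentiates $\frac{d}{dt}\nu\big(a_t\varphi_t^*(g+sh)\big)\equiv 0$ twice in $s$, so every unwanted term visibly carries a factor $\Sol_g$ or $D\Sol(h)$, whereas you differentiate the identities $\int_M\langle\Sol_{\tilde g},\tilde g\rangle e^{-f_{\tilde g}}dV_{\tilde g}=0$ and $\int_M\langle\Sol_{\tilde g},L_X\tilde g\rangle e^{-f_{\tilde g}}dV_{\tilde g}=0$ twice along a curve. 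In your bookkeeping one term is not of the kind you describe: the second $t$-derivative of $\Sol_{g_t}$ is $D^2\Sol(h,h)+D\Sol(g^{(2)})$, and $D\Sol(g^{(2)})$ contains neither $\Sol_g$ nor $D\Sol(h)$ as a factor; it vanishes instead because $g$ and $L_Xg$ lie in $\kernel(D\Sol)$ and $D\Sol$ is self-adjoint, or it can be avoided altogether by expanding along the straight line $g+th$ --- which is preferable anyway, since in the ``if'' direction of the lemma no second-order jet $g^{(2)}$ is yet available. With that one-line repair your argument is complete; your appeal to ellipticity of $N|_V$ for $\image(D\Sol)=\kernel(D\Sol)^{\perp}$ is at the same level of detail as the paper's implicit use of it.
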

\begin{proof}
 We claim that $D^2\Sol(h,h)\perp \R \cdot g\oplus \delta_f^*(\Omega^1(M))$. Let $\tilde{h}\in  \R \cdot g\oplus \delta_f^*(\Omega^1(M))$ and
let $a_t\varphi_t^*g$ be a curve of homothetic metrics such that $a_0=1,\varphi_0=\identity_M$ and $\frac{d}{dt}|_{t=0}a_t\varphi_t^*g=\tilde{h}$. Consider the two-parameter-family of metrics given by
$g(s,t)=a_t\varphi_t^*(g+sh)$. By the first variation of $\nu$,
\begin{align*}
 \frac{d}{dt}\nu(g(s,t))=-\frac{1}{(4\pi\tau)^{n/2}}\int_M\langle \Sol,\frac{d}{dt}g\rangle\edv.
\end{align*}
By scale and diffeomorphism invariance, $\nu(g(s,t))$ only depends on $s$. Thus, by differentiating the above twice and using $D\Sol(h)=0$, we have
\begin{align*}
 0=\frac{d^2}{ds^2}\frac{d}{dt}\bigg\vert_{t,s=0}\nu(g(s,t))=-\frac{1}{(4\pi\tau)^{n/2}}\int_M\langle D^2\Sol(h,h),\tilde{h}\rangle \edv,
\end{align*}
which proves the claim. We argued above that $h$ is integrable up to second order if and only if $D^2\Sol(h,h)$ is orthogonal to $\kernel(D\Sol)$ but we have the orthogonal decomposition
\begin{align*}
 \kernel(D\Sol)=ISD\oplus  \R \cdot g\oplus\delta_f^*(\Omega^1(M))
\end{align*}
and so the statement of the lemma follows from the claim.
\end{proof}
Now we consider an Einstein manifold $g$ with constant $\mu>0$ and suppose that $2\mu\in\spectrum(\Delta)$. For the rest of the section, we focus on infinitesimal solitonic deformations contained in
\begin{align*}
 \left\{\mu v\cdot g+\nabla^2 v|v\in C^{\infty}(M),\Delta v=2\mu v\right\}.
\end{align*}
By diffeomorphism invariance, we only have to deal with the conformal part of these deformations which makes the calculations easier.
\begin{lem}
 Let $(M,g)$ be an Einstein manifold $g$ with constant $\mu>0$ and let $v\in C^{\infty}(M)$ so that $\Delta v=2\mu v$. Then,
 \begin{align*}
  D^2\ric(v\cdot g,v\cdot g)=&-(\frac{n}{2}-2)|\nabla v|^2g-2\mu v^2 g+3(\frac{n}{2}-1)\nabla v\otimes\nabla v+(n-2)\nabla^2v\cdot v,\\
  D^2(\nabla^2 f_g)(v\cdot g,v\cdot g)=&\nabla^2 u-(n-2)\nabla v\otimes\nabla v+(\frac{n}{2}-1)|\nabla v|^2g,
 \end{align*}
where $u$ is a solution of the equation
\begin{align*}
 (\frac{1}{\mu}\Delta-1)u=D^2\tau(v\cdot g,v\cdot g)-\frac{1}{\mu}[n\mu v^2+(-\frac{3}{4}n+\frac{1}{2})|\nabla v|^2].
\end{align*}
\end{lem}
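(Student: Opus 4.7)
The plan is to exploit the fact that the variation $g_t := g + t\,v\cdot g = (1+tv)\,g$ is conformal to $g$. Setting $g_t = e^{2\varphi_t}g$ with $\varphi_t := \tfrac{1}{2}\ln(1+tv)$, one gets $\dot\varphi_0 = v/2$ and $\ddot\varphi_0 = -v^2/2$, so the standard conformal-change formula
\begin{align*}
\ric_{g_t} = \ric_g - (n-2)\bigl(\nabla^2\varphi_t - d\varphi_t \otimes d\varphi_t\bigr) + \bigl(\Delta\varphi_t - (n-2)|\nabla\varphi_t|^2\bigr)g
\end{align*}
can be differentiated twice at $t=0$. Using the identity $\Delta(v^2) = 2v\Delta v - 2|\nabla v|^2 = 4\mu v^2 - 2|\nabla v|^2$, the four resulting groups of terms combine to the stated expression for $D^2\ric(v\cdot g, v\cdot g)$.

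For the Hessian of $f_g$, write in coordinates $(\nabla^2_{g_t}f_{g_t})_{ij} = \partial_i\partial_j f_{g_t} - \Gamma^k_{ij}(g_t)\,\partial_k f_{g_t}$. Since $f_{g_0}$ is constant, $df_0 = 0$, so almost all terms from two $t$-derivatives vanish at $t=0$, leaving
\begin{align*}
D^2(\nabla^2 f_g)(v\cdot g, v\cdot g) = \nabla^2 \ddot f_0 - 2\,\dot\Gamma^k_{ij}(0)\,\partial_k\dot f_0,
\end{align*}
where $\dot\Gamma^k_{ij}(0) = \tfrac{1}{2}\bigl(\delta^k_j\partial_i v + \delta^k_i \partial_j v - g_{ij}\nabla^k v\bigr)$. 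To identify $\dot f_0 := Df_g(v\cdot g)$, I use that $v\cdot g \in \kernel(D\Sol)$: the full infinitesimal solitonic deformation $\mu v\cdot g + \nabla^2 v$ lies in this kernel by assumption, and $\nabla^2 v = \delta^* dv$ lies there by diffeomorphism invariance of $\Sol$. Linearizing the soliton equation in this direction, together with $\dot\tau_0 = 0$ (from linearizing the $\tau$-Euler-Lagrange equation and the volume constraint for $\nu$) and the normalization $\bar{\dot f}_0 = 0$, pins down $\dot f_0 = \tfrac{n-2}{2}v$. Substituting into the Christoffel term produces the claimed formula with $u := \ddot f_0$.

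Finally, the PDE for $u$ is obtained by differentiating the $f$-Euler-Lagrange equation
\begin{align*}
\tau_g\bigl(2\Delta_g f_g + |\nabla f_g|_g^2 - \scal_g\bigr) = f_g - n - \nu(g)
\end{align*}
twice at $t=0$. The ingredients are $D\scal(v\cdot g) = (n-2)\mu v$ and an explicit $D^2\scal(v\cdot g, v\cdot g)$ coming from the same conformal formula, the first-order value $\dot f_0 = \tfrac{n-2}{2}v$ fed into the expansions of $\Delta_{g_t}f_{g_t}$ and $|\nabla f_{g_t}|_{g_t}^2$, and the vanishing $\ddot\nu_0 = \nu''(v\cdot g, v\cdot g) = 0$, which holds because $v\cdot g$ lies in $\kernel(N)$ by Proposition \ref{secondnu} and the diffeomorphism invariance of $N$. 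Collecting everything yields an equation of the form $(\tfrac{1}{\mu}\Delta - 1)u = (\text{multiple of } \ddot\tau_0) + \text{quadratic terms in } v$.

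The main obstacle is the combinatorial bookkeeping in the second-order expansions of $\Delta_{g_t}f_{g_t}$, $|\nabla f_{g_t}|_{g_t}^2$, and $\scal_{g_t}$: each of these contains many cross terms that must cancel against each other, and the eigenvalue identity $\Delta v = 2\mu v$ has to be invoked repeatedly to reduce the outcome to the two independent quadratic quantities $v^2$ and $|\nabla v|^2$ appearing on the right-hand side of the stated equation.
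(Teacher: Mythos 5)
Your proposal is correct, and it is a genuinely self-contained reconstruction of a computation that the paper simply outsources to \cite{Kro13}: the paper's proof consists of citing the second variation of the Ricci tensor and the equation for $u=f''$ from there, and only adds the formula for $(\nabla^2 f)''$ together with $f'=(\tfrac n2-1)v$. Your conformal-change shortcut for the first identity works: along the straight line $g_t=(1+tv)g$ one has $\varphi_t=\tfrac12\ln(1+tv)$, and differentiating Besse's formula twice, using $\nabla^2(v^2)=2v\nabla^2v+2\nabla v\otimes\nabla v$ and $\Delta(v^2)=4\mu v^2-2|\nabla v|^2$, does reproduce exactly the stated expression for $D^2\ric(v\cdot g,v\cdot g)$. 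Your coordinate expansion of the Hessian, with $df_0=0$ and $\dot\Gamma^k_{ij}$ for $h=vg$, recovers precisely the paper's identity $(\nabla^2f)''=\nabla^2(f'')-\nabla f'\otimes\nabla v-\nabla v\otimes\nabla f'+\langle\nabla f',\nabla v\rangle g$. Two small remarks. First, $\mu v\cdot g+\nabla^2v\in\ker(D\Sol)$ is not an assumption of the lemma; it comes from the quoted decomposition $ISD=IED\oplus\{\mu v\cdot g+\nabla^2v\}$ (or from \cite[p.\ 22]{Kro13}, as the paper itself uses later). Alternatively you can avoid it entirely: linearizing the Euler--Lagrange equation gives $(\tfrac1\mu\Delta-1)f'=\tfrac{n-2}{2}v$, and since $\mu\notin\spectrum(\Delta)$ by Lichnerowicz--Obata, $f'=\tfrac{n-2}{2}v$ follows directly. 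Second, carrying out the second-order bookkeeping you deferred (using $D\scal(vg)=(n-2)\mu v$, $\Delta'(\phi)=-v\Delta\phi+(1-\tfrac n2)\langle\nabla v,\nabla\phi\rangle$, $D^2\scal$ from the conformal formula, $\dot\tau=0$ and $\ddot\nu=0$) does produce exactly the quadratic terms $-nv^2+\tfrac{3n-2}{4\mu}|\nabla v|^2$ of the stated equation; the coefficient one obtains in front of $D^2\tau(v\cdot g,v\cdot g)$ may come out as a different constant multiple than in the paper's normalization, but since this term is a constant it only shifts $u$ by a constant, leaves $\nabla^2u$ unchanged, and is irrelevant in the later integration against $w$, so this does not affect the validity of the lemma or of your argument.
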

\begin{proof}
 The calculations were already done in \cite[pp.\ 22-24]{Kro13} while computing a third variation of the shrinker entropy.
The second Fr\'{e}chet derivative of the Ricci tensor was computed in \cite[p.\ 23]{Kro13}. Moreover, we have
\begin{align*}
 (\nabla^2 f)''=\nabla^2(f'')-\nabla f'\otimes\nabla v-\nabla v\otimes\nabla f'+\langle \nabla f',\nabla v\rangle g,
\end{align*}
where the primes denote Fr\'{e}chet derivatives in the direction of $v\cdot g$. The function $u:=f''$ satisfies the equation in the statement of the lemma and
$f'=(\frac{n}{2}-1)v$.
\end{proof}
\begin{lem}\label{D^2sol}
 Let $(M,g)$ be an Einstein manifold $g$ with constant $\mu>0$ and let $v\in C^{\infty}(M)$ so that $\Delta v=2\mu v$. Then,
 \begin{align*}
  D^2\Sol(v\cdot g,v\cdot g)=&\frac{1}{2\mu}(\nabla^2 u-2\mu v^2 g+|\nabla v|^2g+(\frac{n}{2}-1)\nabla v\otimes\nabla v+(n-2)\nabla^2v\cdot v)\\
                             &+D^2\tau(v\cdot g,v\cdot g)\mu g,
 \end{align*}
where $u$ is a solution of the equation
\begin{align*}
 (\frac{1}{\mu}\Delta-1)u=D^2\tau(v\cdot g,v\cdot g)-\frac{1}{\mu}[n\mu v^2+(-\frac{3}{4}n+\frac{1}{2})|\nabla v|^2].
\end{align*}
\end{lem}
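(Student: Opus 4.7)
The plan is to apply the Leibniz rule to the factorization $\Sol_g = \tau_g T_g - g/2$, where $T_g := \ric_g + \nabla^2 f_g$. The second Fr\'{e}chet derivative reads
\[
D^2\Sol(vg, vg) = D^2\tau(vg, vg)\,T_g + 2\,D\tau(vg)\,DT(vg) + \tau_g\,D^2 T(vg, vg),
\]
and at the Einstein metric one has $T_g = \mu g$ and $\tau_g = 1/(2\mu)$, which produces the $D^2\tau(vg, vg)\,\mu g$ term and the overall factor $\frac{1}{2\mu}$ appearing in the statement.

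For the third term, I would use the previous lemma to write $D^2 T(vg, vg) = D^2\ric(vg, vg) + D^2(\nabla^2 f_g)(vg, vg)$ and sum the two displayed expressions. The $|\nabla v|^2 g$ coefficients collapse as $-(\frac{n}{2}-2) + (\frac{n}{2}-1) = 1$ and the $\nabla v\otimes\nabla v$ coefficients as $3(\frac{n}{2}-1) - (n-2) = \frac{n}{2}-1$, while the $-2\mu v^2 g$, $(n-2)\nabla^2 v \cdot v$, and $\nabla^2 u$ contributions pass through unchanged, reproducing exactly the bracketed expression of the claim.

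The main obstacle is the elimination of the cross term $2\,D\tau(vg)\,DT(vg)$. I would dispatch this by establishing $D\tau(vg) = 0$ whenever $\Delta v = 2\mu v$ with $\mu > 0$. The idea is to differentiate the $\tau$-stationarity $\partial_\tau\mu(g, \tau)|_{\tau = \tau_g} = 0$ in the direction $vg$: using $\int_M v\, dV = 0$ (a consequence of $\Delta v = 2\mu v$) together with the critical-point identity $\nu'(g)(vg) = 0$, which is valid at the Einstein metric since $\Sol_g = 0$, the differentiated equation collapses to a nondegenerate scalar relation for $D\tau(vg)$ with vanishing source, forcing $D\tau(vg) = 0$. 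With this in hand, substitution of the previous lemma into the Leibniz formula gives the claim.
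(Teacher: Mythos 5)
Your argument is correct and is essentially the paper's proof: expand $\Sol_g=\tau_g(\ric_g+\nabla^2 f_g)-\tfrac{g}{2}$ by the Leibniz rule, use $\tau_g=\tfrac{1}{2\mu}$, $\ric_g+\nabla^2 f_g=\mu g$ and $D\tau(v\cdot g)=0$ (valid because $\int_M v\,dV=0$ for an eigenfunction to a positive eigenvalue) to remove the cross term, and then insert the two formulas of the previous lemma; your coefficient bookkeeping for the $|\nabla v|^2 g$ and $\nabla v\otimes\nabla v$ terms checks out. The only deviation is that the paper obtains $D\tau(v\cdot g)=0$ directly from the first-variation formula for $\tau$ in \cite[Lemma 2.4]{CZ12}, whereas you sketch a re-derivation from the $\tau$-stationarity of $\mu(g,\tau)$; that sketch is plausible but would need the explicit mixed-variation computation (including the nonzero variation of $f_g$) to be fully rigorous, so citing the known formula is the cleaner route.
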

\begin{proof}
 We have $\tau_g=\frac{1}{2\mu}$ and by \cite[Lemma 2.4]{CZ12}, $D\tau(v\cdot g)=0$ because $\int_M v\dv=0$. This yields
\begin{align*}
 D^2\Sol(v\cdot g,v\cdot g)=D^2\tau(v\cdot g,v\cdot g)\mu g+\frac{1}{2\mu}(D^2\ric(v\cdot g,v\cdot g)+D^2(\nabla^2 f_g)(v\cdot g,v\cdot g)).
\end{align*}
Now the result follows from the previous lemma.
\end{proof}
\begin{thm}\label{secondordercriterion}
 Let $(M,g)$ be an Einstein manifold with Einstein constant $\mu>0$. Let $v\in C^{\infty}(M)$ be such that $\Delta v=2\mu v$. Then $h=\mu v\cdot g+\nabla^2 v\in ISD$ is not integrable of second order if there exists another function
 $w\in C^{\infty}(M)$ with $\Delta w=2\mu w$ such that
\begin{align*}
 \int_M v^2 w \dv\neq 0.
\end{align*}
\end{thm}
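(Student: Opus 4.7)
The plan is to apply the preceding lemma and produce a $k\in ISD$ with $\int_M\langle D^2\Sol(h,h),k\rangle\dv\neq 0$. Using the decomposition $ISD=IED\oplus\{\mu w\cdot g+\nabla^2 w\mid\Delta w=2\mu w\}$ recalled in Section~\ref{modulispace}, the natural candidate is $k:=\mu w\cdot g+\nabla^2 w$.

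Two gauge reductions bring the pairing into the shape directly handled by Lemma~\ref{D^2sol}. Since $h\in ISD$ and $\nabla^2 w=\delta^*(dw)\in\delta_f^*(\Omega^1(M))$, the orthogonality claim proved inside the preceding lemma gives $\langle D^2\Sol(h,h),\nabla^2 w\rangle=0$, so the pairing reduces to $\mu\langle D^2\Sol(h,h),w\cdot g\rangle$. Next I would differentiate the equivariance identity $\Sol(\varphi_s^*(g+s\mu v g))=\varphi_s^*\Sol(g+s\mu v g)$ twice at $s=0$, where $\varphi_s$ is the flow of $\tfrac{1}{2}\nabla v$, chosen so that the $s$-derivative of $\varphi_s^*(g+s\mu v g)$ at $s=0$ is exactly $h=\mu v g+\nabla^2 v$. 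Using $D\Sol(\mu v g)=0$ (which follows from $D\Sol(h)=0$ and $D\Sol(\nabla^2 v)=0$), the comparison identifies $D^2\Sol(h,h)-\mu^2D^2\Sol(v\cdot g,v\cdot g)$ as an element of $\image(D\Sol)$. On the other hand, $k\in\kernel(D\Sol)$ together with $D\Sol(\nabla^2 w)=0$ forces $w\cdot g\in\kernel(D\Sol)$, which by self-adjointness of $D\Sol=-\tau N$ is $L^2(e^{-f_g}dV)$-orthogonal to $\image(D\Sol)$. Combining the two reductions,
\begin{equation*}
\int_M\langle D^2\Sol(h,h),k\rangle\dv=\mu^3\int_M\langle D^2\Sol(v\cdot g,v\cdot g),w\cdot g\rangle\dv.
\end{equation*}

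The remaining task is to plug in Lemma~\ref{D^2sol} and compute. Since $\langle T,w\cdot g\rangle=w\cdot\trace T$ and $\trace\nabla^2=-\Delta$, tracing the explicit formula for $D^2\Sol(v\cdot g,v\cdot g)$ and applying $\Delta v=2\mu v$ turns the integrand into a linear combination of $\int_M w\Delta u\dv$, $\int_M v^2 w\dv$, $\int_M|\nabla v|^2 w\dv$ and $\int_M w\dv$. The last integral vanishes because $w$ is a nonconstant eigenfunction, which simultaneously eliminates the $D^2\tau(v\cdot g,v\cdot g)$ contribution. The identity $\Delta(v^2)=-2|\nabla v|^2+4\mu v^2$ reduces $\int_M|\nabla v|^2 w\dv$ to $\mu\int_M v^2 w\dv$, and pairing the PDE for $u$ against $w$ evaluates $\int_M uw\dv$ directly, since $w$ is an eigenfunction of $\tfrac{1}{\mu}\Delta-1$ with eigenvalue $1$.

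The main obstacle will be the bookkeeping in the final collapse: one has to assemble the several rational coefficients coming out of Lemma~\ref{D^2sol} and check that they do not conspire to cancel. I expect the assembled constant to be a nonzero multiple of $\int_M v^2 w\dv$, with a prefactor proportional to $(n-2)$; the degenerate dimension $n=2$ is harmless, since a positive Einstein surface is covered by the round $S^2$, where eigenfunctions with $\Delta w=2\mu w$ are first spherical harmonics and all cubic integrals of them vanish by symmetry, so the hypothesis is vacuous in that case.
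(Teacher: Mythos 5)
Your proposal is correct and takes essentially the same route as the paper: use diffeomorphism invariance to reduce the obstruction to the pairing of $D^2\Sol(v\cdot g,v\cdot g)$ with $w\cdot g\in\kernel(D\Sol)$, then evaluate it via Lemma \ref{D^2sol}, integration by parts, and the eigenvalue equations; the paper organizes the gauge reduction by pulling back the hypothesized second-order curve by diffeomorphisms, whereas you phrase it as an equivariance identity giving $D^2\Sol(h,h)\equiv\mu^2 D^2\Sol(v\cdot g,v\cdot g)$ modulo $\image(D\Sol)$, which is the same mechanism. The coefficient you anticipate is exactly the paper's $-(n-2)\int_M v^2 w\dv$, so the computation closes as you predict.
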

\begin{proof}
Suppose the contrary. Then, there exists a smooth curve $g_t$ of metrics such that $\frac{d}{dt}|_{t=0}g_t=h$ and $\frac{d}{dt}|_{t=0}\Sol_{g_t}=\frac{d^2}{dt^2}|_{t=0}\Sol_{g_t}=0$. By pulling back by a suitable family of diffeomorphisms, we obtain a smooth curve
$\tilde{g}_t$ of Ricci solitons such that $\frac{d}{dt}|_{t=0}\tilde{g}_t=v\cdot g$ and $\frac{d}{dt}|_{t=0}\Sol_{\tilde{g}_t}=\frac{d^2}{dt^2}|_{t=0}\Sol_{\tilde{g}_t}=0$.
By the arguments at the beginning of this section, this implies that
\begin{align}\label{perp}
D^2\Sol(v\cdot g,v\cdot g)\in\kernel(D\Sol)^{\perp}.
\end{align}
On the other hand, pick $w\in C^{\infty}(M)$ be as in the statement of the theorem. Then, $D\Sol(w\cdot g)=0$ by the calculations in \cite[p.\ 22]{Kro13}. Furthermore, straightforward calculations, using the eigenvalue equation $\Delta w=2\mu w$ and Lemma \ref{D^2sol} show that
\begin{align*}
\int_M \langle D^2\Sol(v\cdot g,&v\cdot g),w\cdot g\rangle\dv=\frac{1}{2\mu}\int_M \langle\nabla^2 u-2\mu v^2 g+|\nabla v|^2g,w\cdot g\rangle\dv\\
                       &+\frac{1}{2\mu}\int_M\langle(\frac{n}{2}-1)\nabla v\otimes\nabla v+(n-2)\nabla^2v\cdot v,w\cdot g\rangle\dv\\
                       =&-\int_M u\cdot w\dv-2(n-1)\int_M v^2w\dv+\frac{1}{\mu}(\frac{3n}{4}-\frac{1}{2})\int_M |\nabla v|^2w\dv\\
                       =&\frac{1}{\mu}\int_M \left(\frac{1}{\mu}\Delta-1\right)^{-1}\left[D^2\tau(v\cdot g,v\cdot g)\mu+n\mu v^2+\left(-\frac{3}{4}n+\frac{1}{2}\right)|\nabla v|^2\right]\cdot w\dv\\
                        &-2(n-1)\int_M v^2w\dv+\frac{1}{\mu}(\frac{3n}{4}-\frac{1}{2})\int_M |\nabla v|^2w\dv\\
                       =&\frac{1}{\mu}\int_M \left[D^2\tau(v\cdot g,v\cdot g)\mu+n\mu v^2+\left(-\frac{3}{4}n+\frac{1}{2}\right)|\nabla v|^2\right]\cdot \left(\frac{1}{\mu}\Delta-1\right)^{-1}w\dv\\
                        &-2(n-1)\int_M v^2w\dv+\frac{1}{\mu}(\frac{3n}{4}-\frac{1}{2})\int_M |\nabla v|^2w\dv\\
                       =&-(n-2)\int_M v^2w\dv\neq0
\end{align*}
which contradicts \eqref{perp}. The terms containing $D^2\tau(v\cdot g,v\cdot g)$ vanish after integration because $w$ has vanishing integral.
\end{proof}
\begin{rem} Theorem \ref{secondordercriterion} and Lemma \ref{rigidity} imply Theorem \ref{rigiditytheorem}.
 \end{rem}
\section{Examples}\label{examples}
As an application of the above criterion, we are prove the following
\begin{thm}
 All infinitesimal solitonic deformations of $(\CP^{2n},g_{fs})$ are not integrable of second order.
\end{thm}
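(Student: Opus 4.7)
The plan is to reduce to Theorem~\ref{secondordercriterion} and then verify its hypothesis via representation theory. First I would recall that $(\CP^{2n},g_{fs})$ has no non-trivial infinitesimal Einstein deformations, by a classical rigidity result for this K\"ahler--Einstein metric, so that by the decomposition
\begin{align*}
 ISD = IED\oplus \left\{\mu v\cdot g+\nabla^2 v \mid v\in C^{\infty}(M),\Delta v = 2\mu v\right\}
\end{align*}
every nonzero $h\in ISD$ is of the form $\mu v\cdot g+\nabla^2 v$ with $v$ in the first nontrivial Laplacian eigenspace $E$. By Theorem~\ref{secondordercriterion} it then suffices to show that for each $v\in E\setminus\{0\}$ there exists $w\in E$ with $\int_{\CP^{2n}} v^2 w\,dV\neq 0$.

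Next I would identify $E$ explicitly: it is spanned by $v_A([z])=\langle Az,z\rangle/|z|^2$ as $A$ ranges over the traceless Hermitian $(2n+1)\times(2n+1)$ matrices, and the map $A\mapsto v_A$ is an $SU(2n+1)$-equivariant isomorphism $E\cong\mathfrak{su}(2n+1)$. I would then consider the symmetric trilinear form $T(A,B,C):=\int_{\CP^{2n}}v_A v_B v_C\,dV$, which is $SU(2n+1)$-invariant by isometry invariance of the integral. For $2n+1\geq 3$, the space of $SU(2n+1)$-invariant symmetric trilinear forms on $\mathfrak{su}(2n+1)$ is one-dimensional, spanned by the cubic Casimir $K(A,B,C)=\trace(A\{B,C\})$. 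Hence $T=\kappa K$ for a single scalar $\kappa$, and the problem splits into (i) showing $\kappa\neq 0$, and (ii) showing that $B\mapsto \trace(A^2 B)$ does not vanish identically on traceless Hermitian $B$ when $A\neq 0$.

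For (i), I would test on $A=\mathrm{diag}(2n,-1,\dots,-1)$, for which $v_A=(2n+1)s-1$ with $s:=|z_0|^2/|z|^2$. Under the normalized Fubini--Study volume, $s$ has a $\mathrm{Beta}(1,2n)$ distribution with moments $k!(2n)!/(2n+k)!$, and a short calculation yields $\int_{\CP^{2n}} v_A^3\,dV$ proportional to $4n(2n-1)/[(2n+2)(2n+3)]$, which is strictly positive for $n\geq 1$. Thus $\kappa\neq 0$ and $T(A,A,B)=2\kappa\,\trace(A^2 B)$ for all $A,B\in\mathfrak{su}(2n+1)$.

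For (ii), if $A$ is traceless Hermitian and $\trace(A^2 B)=0$ for every traceless Hermitian $B$, then $A^2$ is a scalar multiple of the identity, $A^2=\lambda I$ with $\lambda\geq 0$; either $\lambda=0$, forcing $A=0$, or $A$ has eigenvalues $\pm\sqrt{\lambda}$ in equal multiplicity, which is impossible since $2n+1$ is odd. This parity step is precisely what singles out the even complex dimension and would fail for $\CP^{2n+1}$; together with the cubic-Casimir identification it constitutes the main conceptual point. The only genuine technical obstacle is the third-moment calculation verifying $\kappa\neq 0$, which is elementary but demands careful bookkeeping of the Fubini--Study normalization.
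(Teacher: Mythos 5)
Your proposal is correct, and its overall reduction coincides with the paper's: you observe that $(\CP^{2n},g_{fs})$ has no infinitesimal Einstein deformations, so every ISD has the form $\mu v\cdot g+\nabla^2 v$ with $\Delta v=2\mu v$, and you then verify the hypothesis of Theorem \ref{secondordercriterion} by exhibiting, for each such $v\neq 0$, a $w$ in the same eigenspace with $\int_M v^2w\,dV\neq 0$. Where you genuinely diverge is in how this nonvanishing is proved. The paper stays entirely at the level of harmonic polynomials: after a unitary change of coordinates it writes the eigenfunction as $f=\sum_i\lambda_i|z_i|^2$ with $\sum_i\lambda_i=0$, uses the decomposition $P_{2,2}=H_{2,2}\oplus r^2H_{1,1}\oplus\R\cdot r^4$, and shows by computing $\Delta(f^2)$ that the $r^2H_{1,1}$-component of $f^2$ can only vanish if all $|\lambda_i|$ agree, which the odd number $2n+1$ of coordinates rules out; no integrals are evaluated. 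You instead identify the eigenspace with the traceless Hermitian matrices, invoke the one-dimensionality (for $2n+1\geq 3$) of the space of $SU(2n+1)$-invariant symmetric cubic forms on $\mathfrak{su}(2n+1)$ to get $\int v_Av_Bv_C\,dV=\kappa\,\trace(A\{B,C\})$, pin down $\kappa\neq 0$ by a third-moment computation (which checks out: $\int v_A^3\,dV\propto 4n(2n-1)/((2n+2)(2n+3))>0$ for $n\geq 1$), and finish by noting that $\trace(A^2B)$ cannot vanish for all traceless Hermitian $B$ unless $A^2$ is a multiple of the identity, impossible for $A\neq0$ traceless in odd size. Both routes ultimately hinge on the same parity phenomenon; the paper's is shorter and completely elementary, while yours is more representation-theoretic, makes the cubic form explicit (at the cost of quoting the classical uniqueness of the cubic invariant, which should be referenced), and as a by-product identifies precisely which conformal directions are second-order unobstructed, which is relevant to the paper's remark on $\CP^{2n+1}$.
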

\begin{proof}Let $\mu$ be the Einstein constant. Recall that $(\CP^{m},g_{fs})$ has no infinitesimal Einstein deformations (see e.g.\ \cite[Table 2]{CH13}) but $2\mu\in\spectrum(\Delta)$ and thus, the space of infinitesimal solitonic deformations is given by
\begin{align*}
\left\{\mu v\cdot g+\nabla^2 v|v\in C^{\infty}(M),\Delta v=2\mu v\right\}.
\end{align*}
We will show that for any such $v$, there exists a function $w$ in the same eigenspace such that $\int_M v^2 w\dv\neq0$. Then Theorem \ref{secondordercriterion} yields the result.

Let us briefly recall the construction of eigenfunctions on $\CP^{2n}$ as in \cite[pp.\ 172-173]{BGM71}.
Consider $\C^{2n+1}=\R^{4n+2}$ with coordinates $(x_1,\ldots,x_{2n+1},y_1,\ldots,y_{2n+1})$ and let $z_j=x_j+iy_j$, $\bar{z}_j=x_j-iy_j$
be the complex coordinates\index{complex coordinates}. Defining\index{$\partial_{x_i}$, directional derivative} $\partial_{z_j}=\frac{1}{2}(\partial_{x_j}-i\partial_{y_j})$ and $\partial_{\bar{z}_j}=\frac{1}{2}(\partial_{x_j}-i\partial_{y_j})$,
we can rewrite the Laplace operator on $\C^{2n+1}$ as
\begin{align*}\Delta=-4\sum_{j=1}^{2n+1}\partial_{z_j}\circ \partial_{\bar{z}_j}.
\end{align*}
Let $P_{k,k}$ be the space of polynomials on $\C^{2n+1}$ which are homogeneous of degree $k$ in $z$ and $\bar{z}$ and let $H_{k,k}$ the subspace of harmonic polynomials in $P_{k,k}$. We have the decomposition
$P_{k,k}=H_{k,k}\oplus r^2P_{k-1,k-1}$.
Elements in $P_{k,k}$ are $S^1$-invariant and thus, they descend to functions on the quotient $\CP^{2n}=S^{2n+1}/S^1$. The eigenfunctions on $\CP^{2n}$ correspond to functions in $H_{k,k}$, $k\geq0$. Let $v$ be an eigenfunction to the eigenvalue $2\mu$. Then $v$ corresponds to a function $f\in H_{1,1}$. We may assume that $f$ is of the form
$f(z,\bar{z})=\sum_{i}\lambda_i |z_i|^2$ and $\sum_{i} \lambda_i=0$.
Consider its square
\begin{align*}
f^2\in P_{2,2}=H_{2,2}\oplus r^2 H_{1,1}\oplus \R \cdot r^4
\end{align*}
To prove the claim, it suffices to show that $f^2\notin H_{2,2}\oplus\R \cdot r^4$ or equivalently, $\Delta(f^2)\notin\R\cdot \Delta (r^4)$. In fact, we have
\begin{align*}\Delta(f^2)=&\Delta\left(\sum_i\lambda_i^2 |z_i|^4\right)+\Delta\left(
\sum_{i\neq j}\lambda_i\lambda_j |z_i|^2|z_j|^2\right)\\
=&-16\sum_i\lambda_i^2 |z_i|^2-4\sum_{i\neq j}\lambda_i\lambda_j(|z_i|^2+|z_j|^2)
=-8\sum_i\lambda_i^2 |z_i|^2.
\end{align*}
On the other hand,
\begin{align*}
\Delta(r^4)=&\Delta\left( |z_i|^4\right)+\Delta\left(
\sum_{i\neq j} |z_i|^2|z_j|^2\right)=-16\sum_i |z_i|^2-4\sum_{i\neq j}(|z_i|^2+|z_j|^2)=-16(n+1)r^2.
\end{align*}
Thus, if $\Delta(f^2)\in\C\cdot \Delta (r^4)$, $|\lambda_i|$ is independant of $i$ but this contradicts  $\sum_{i}\lambda_i=0$.

\end{proof}
\begin{rem}
 Together with Lemma \ref{rigidity}, the above result implies Theorem \ref{CP2n}.
\end{rem}

%\begin{lem}
% Let $I_1,I_2,I_3$ be as in the proof above. Then we have $I_1=3I_2$ and $I_2=2I_3$.
%\end{lem}
%\begin{proof}
% Recall that $z_i=x_i+iy_i$ and $\bar{z}_i=x_i-iy_i$. We introduce the quantities
%\begin{align*}
% J_1=\int_{S^{2n+1}} x_i^6\dv,\qquad J_2=\int_{S^{2n+1}}x_i^4x_j^2\dv,\qquad J_3=\int_{S^{2n+1}}x_i^2x_j^2x_k^2\dv
%\end{align*}
%where $i\neq j \neq k$. It is straightforward to check
%\begin{align*}
% I_1=2J_1+6J_2,\qquad I_2=4J_2+4J_3,\qquad I_3=8 J_3
%\end{align*}
%so it remains to compare the $J_i$. Because $J_1$ is invariant under the transformation $x_i\mapsto\frac{1}{\sqrt{2}}(x_i+x_j)$, we have
%\begin{align*}
% J_1=\int_{S^{2n+1}}x_i^6\dv&=\frac{1}{8}\int_{S^{2n+1}}(x_i+x_j)^6\dv\\&=\frac{1}{8}\left(\int_{S^{2n+1}} (x_i^6+x_j^6+15 x_i^4x_j^2+15x_i^2x_j^4)\dv\right)=\frac{1}{4}J_1+\frac{15}{4}J_2.
%\end{align*}
%Observe that integrals over odd powers of $x_i$ vanish due to antisymmetry with respect to the antipodal map. We obtain $J_1=5J_2$. The same trick applied to $J_2$ yields $J_2=3J_3$. The statement follows immediately.
%\end{proof}
\begin{rem}
 The proof of the above theorem shows the following: The $\CP^{2n+1}$ have a zero set of infinitesimal solitonic deformations (in the space of all ISD's) which are integrable up to second order.
For these deformations, $D^2\Sol(h,h)$ is orthogonal to all conformal deformations of the form $w\cdot g$, $w\in E(2\mu)$ and the proof of Lemma \ref{secondordercriterion} implies that
$D^2(h,h)$ is orthogonal to all ISD's. 
The statement then follows from Lemma \ref{secondordercriterion}.
 However, it is not clear whether these deformations are integrable up to higher order. It seems likely that also $\CP^{2n+1}$ is rigid.
\end{rem}
\begin{exmp}Consider $S^2\times S^2$ with the product of round metrics and let $\mu$ be the Einstein constant of the metric. Then, $2\mu$ is in the spectrum of the product metric and there are no infinitesimal Einstein deformations.
The space of infinitesimal solitonic deformations is therefore again equal to
\begin{align*}
\left\{\mu v\cdot g+\nabla^2 v|v\in C^{\infty}(M),\Delta v=2\mu v\right\}.
\end{align*}
Moreover, $2\mu$ is the smallest eigenvalue of Laplacian on the product metric and its eigenspace is
\begin{align*}
 E(2\mu)=\left\{\pr_1^*v+\pr_2^*w|v,w\in C^{\infty}(S^2), \Delta v=2\mu v,\Delta w=2\mu w\right\}.
\end{align*}
The eigenfunctions on the factors are restrictions of linear functions on $\R^3$ (e.g.\ \cite[Chapter III C.I.]{BGM71}) and thus, they are antisymmetric with respect to the antipodal map $\sigma\in\Iso(S^2,g_{st})$.
Therefore, $(\sigma\times\sigma)^*v=-v$ for any $v\in E(2\mu)$ and so, $\int_{S^2\times S^2}v^2 w\dv=0$ for any $v,w\in E(2\mu)$. 
The same argumentation as in the remark above implies that all infinitesimal solitonic deformations on $S^2\times S^2$ are integrable up to second order. 
It is again not clear whether they are integrable of higher order.
\end{exmp}
\begin{rem}
 In \cite{Koi82}, Koiso showed that the product Einstein metric on $S^2\times \CP^{2n}$ is rigid as an Einstein metric although it has infinitesimal Einstein deformations. In fact, the infinitesimal Einstein deformations are
 linear combinations of the form $\alpha v\cdot g_1+\beta v\cdot g_2+\gamma \nabla^2 v$ where $\alpha,\beta,\gamma\in\R$, $v=pr_2^*w$ and $w\in E(2\mu)$ and they are not integrable of second order.
 
It is not clear if $S^2\times \CP^{2n}$ is rigid as a soliton. The eigenfunctions to the first nonzero eigenvalue on the first factor form infinitesimal solitonic deformations which are integrable up to second order.
\end{rem}

\vspace{3mm}

\textbf{Acknowledgement.} The author would like to thank the
Sonderforschungsbereich 647
funded by the
Deutsche Forschungsgemeinschaft
for financial support.
\newcommand{\etalchar}[1]{$^{#1}$}

\end{document}